\newtheorem{theorem}{Theorem}
\newtheorem{lemma}[theorem]{Lemma}
\newenvironment{proof}{\noindent{\bf Proof.}}{\hspace*{2mm}~$\square$}
\newenvironment{proofof}[1]{\noindent{\bf Proof of #1.}}{\hspace*{2mm}~$\square$}
\newcommand{\N}{\mathbb{N}}
\newcommand{\Z}{\mathbb{Z}}
\newcommand{\R}{\mathbb{R}}
\newcommand{\G}{\mathscr{G}}
\newcommand{\V}{\mathscr{V}}
\newcommand{\E}{\mathscr{E}}
\newcommand{\C}{\mathscr{C}}
\newcommand{\D}{\mathscr{D}}
\newcommand{\ind}{\mathbf{1}}
\newcommand{\ep}{\epsilon}
\newcommand{\n}{\hspace*{-5pt}}
\newcommand{\il}{L_i}
\newcommand{\cl}{L_c}
\DeclareMathOperator{\card}{card}
\DeclareMathOperator{\uniform}{Uniform \,}
\DeclareMathOperator{\sign}{sign \,}
\begin{document}

\begin{frontmatter}
\title     {Rigorous results for the distribution of money \\ on connected graphs (models with debts)}
\runtitle  {Distribution of money for models with debts}
\author    {Nicolas Lanchier and Stephanie Reed}
\runauthor {Nicolas Lanchier and Stephanie Reed}
\address   {School of Mathematical and Statistical Sciences \\ Arizona State University \\ Tempe, AZ 85287, USA. \\ nicolas.lanchier@asu.edu \\ sjreed5@asu.edu}

\maketitle

\begin{abstract} \ \
 In this paper, we continue our analysis of spatial versions of agent-based models for the dynamics of money that have been
 introduced in the statistical physics literature, focusing on two models with debts.
 Both models consist of systems of economical agents located on a finite connected graph representing a social network.
 Each agent is characterized by the number of coins she has, which can be negative in case she is in debt, and each monetary transaction consists
 in one coin moving from one agent to one of her neighbors.
 In the first model, that we name the model with individual debt limit, the agents are allowed to individually borrow up to a fixed number of coins.
 In the second model, that we name the model with collective debt limit, agents can borrow coins from a central bank as long as the bank is not empty,
 with reimbursements occurring each time an agent in debt receives a coin.
 Based on numerical simulations of the models on complete graphs, it was conjectured that, in the large population/temperature limits, the
 distribution of money converges to a shifted exponential distribution for the model with individual debt limit, and to an asymmetric Laplace distribution
 for the model with collective debt limit.
 In this paper, we prove exact formulas for the distribution of money that are valid for all possible social networks.
 Taking the large population/temperature limits in the formula found for the model with individual debt limit, we prove convergence to the shifted
 exponential distribution, thus establishing the first conjecture.
 Simplifying the formula found for the model with collective debt limit is more complicated, but using a computer to plot this formula
 shows an almost perfect fit with the Laplace distribution, which strongly supports the second conjecture.
\end{abstract}

\begin{keyword}[class=AMS]
\kwd[Primary ]{60K35, 91B72}
\end{keyword}

\begin{keyword}
\kwd{Interacting particle systems, econophysics, distribution of money, models with debts.}
\end{keyword}

\end{frontmatter}



\section{Introduction}
\label{paper3:intro}
 The main objective of this paper is to continue the mathematical analysis of economical models for the dynamics of money initiated by
 the authors in~\cite{lanchier_2017b, lanchier_reed_2018}.
 These models consist of (typically) large systems of economical agents, where each agent is characterized by the amount of money, or number of
 coins, she has at a given time.
 The processes evolve in discrete time and, at each time step, two agents are selected at random from the entire population to engage in a monetary
 transaction.
 The main problem about these models is to find the limiting distribution of money, i.e., letting~$\xi_t (x)$ be the number of coins agent~$x$ has
 at time~$t$, the objective is to find
 $$ \lim_{t \to \infty} P (\xi_t (x) = c) $$
 the probability that agent~$x$ has~$c$ coins at equilibrium.
 Physicists have introduced a number of economical models and, relying on numerical simulations, were able to derive interesting conjectures about
 the distribution of money in the limit as the number of individuals, and the average number of coins per individual called the money temperature,
 both tend to infinity.
 The simplest such system, which we call the {\bf one-coin model}, was introduced in~\cite{dragulescu_yakovenko_2000}.
 In this model, after two agents have been selected to engage in a monetary transaction, one of the two agents chosen at random gives one of her
 coins to the other agent (if she has at least one).
 The authors of~\cite{dragulescu_yakovenko_2000} conjectured that, for this simple model, the distribution of money converges to the exponential
 distribution with mean the money temperature in the large population/temperature limits. \\
\indent More realistic variants of the one-coin model where monetary transactions involve multiple coins were also introduced in the statistical
 physics literature, such as the three models studied analytically by the authors in~\cite{lanchier_reed_2018} that can be described as follows.
\begin{itemize}
 \item In the {\bf uniform reshuffling model} also introduced in~\cite{dragulescu_yakovenko_2000}, all the coins of the two agents
       selected to interact are uniformly redistributed between the two agents.
       The simulations performed in~\cite{dragulescu_yakovenko_2000} suggest that, for this type of monetary transaction, the distribution
       of money at equilibrium converges to the exponential distribution with mean the money temperature in the large population/temperature limits,
       just like in the one-coin model. \vspace*{4pt}
 \item In the {\bf immediate exchange model} introduced in~\cite{heinsalu_patriarca_2014}, each of the two interacting agents chooses independently
       and uniformly at random a number of her coins that she gives to the other interacting agent.
       Results from~\cite{heinsalu_patriarca_2014, katriel_2015} suggest that, in this case, the distribution of money converges to a gamma
       distribution with mean the money temperature and shape parameter two in the large population/temperature limits. \vspace*{4pt}
 \item In the {\bf model with saving propensity} introduced in~\cite{chakraborti_chakrabarti_2000}, the two interacting agents independently save
       some of their coins, and the remaining coins are uniformly redistributed between the two agents, just like in the uniform reshuffling model.
       The computer simulations performed in~\cite{patriarca_chakraborti_kashi_2004} suggest that, like in the immediate exchange model, the limiting
       distribution of money converges to a gamma distribution with mean the money temperature and shape parameter two in the large
       population/temperature limits.
\end{itemize}
 All the conjectures above have been recently proved analytically by the authors in~\cite{lanchier_2017b} for the one-coin model,
 and in~\cite{lanchier_reed_2018} for the other three models.
 In addition, we were able to extend these results to more general models where the economical agents are located on a finite connected graph
 and can only interact with their neighbors.
 Thinking of the graph as a social network, this means that individuals can only exchange money with their friends or business partners, which
 results in a more realistic model.
 Interestingly, we were able to prove that the distribution of the number of coins a given agent has at equilibrium does not depend on the
 number of connections of this agent, in particular the distribution is the same for agents with a large number of connections and for agents
 with only one neighbor. \\
\indent In this paper, we continue our analysis, looking now at two models with debts introduced in the physics
 literature~\cite{dragulescu_yakovenko_2000, xi_ding_wang_2005}.
 The inclusion of debts is modeled by the fact that the agents can now have a negative number of coins.
 The evolution rules at each transaction are the same as in the idealized one-coin model, but the models also differ qualitatively
 in that~\cite{dragulescu_yakovenko_2000} assumes that the agents have the same individual debt limit whereas~\cite{xi_ding_wang_2005}
 assumes that there is a collective limit.
\begin{itemize}
 \item We call model with {\bf individual debt limit} the model with debts introduced in~\cite{dragulescu_yakovenko_2000}.
       In this model, agents are allowed to individually borrow~$\il$ coins, and the numerical simulations performed
       in~\cite{dragulescu_yakovenko_2000} suggest that the distribution of money at equilibrium now converges to a shifted exponential
       distribution in the large population/temperature limits. \vspace*{4pt}
 \item We call model with {\bf collective debt limit} the model in~\cite{xi_ding_wang_2005}.
       In this model, agents can borrow coins from a bank that starts with~$\cl$ coins as long as the bank is not empty, with
       reimbursements occurring each time an agent in debt receives a coin.
       It was conjectured in~\cite{xi_ding_wang_2005} that the distribution of money at equilibrium converges to an asymmetric Laplace
       distribution in the large population/temperature limits.
\end{itemize}
 As in~\cite{lanchier_2017b, lanchier_reed_2018}, we study spatial generalizations of these economical models where agents are located
 on a social network.
 Following an approach similar to~\cite{lanchier_2017b}, we give a complete proof of (and extend) the conjecture in~\cite{dragulescu_yakovenko_2000}
 about the model with individual debt limit.
 The model with collective debt limit is more challenging.
 Our main result gives an exact expression of the distribution of money at equilibrium for all possible number of individuals and coins, but
 we were not able to simplify this expression in the large population/temperature limits to prove the conjecture in~\cite{xi_ding_wang_2005}.
 However, using a computer to plot the exact distribution found analytically shows an almost perfect fit with the Laplace distribution
 found numerically in~\cite{xi_ding_wang_2005}, which strongly supports their conjecture. \\
\indent In the next two sections, we give a rigorous definition of the spatial versions of the two models with debts introduced
 in~\cite{dragulescu_yakovenko_2000, xi_ding_wang_2005}, and state our main results about the distribution of money at equilibrium.
 The other sections are devoted to proofs.


\section{Model description}
\label{paper3:models}
 In contrast with~\cite{dragulescu_yakovenko_2000, xi_ding_wang_2005} that rely on numerical simulations restricted to models where all pairs of
 individuals are equally likely to interact at each time step, our analysis is general enough to account for local interactions and network structure.
 This means that the individuals are more realistically located on the set of vertices of a general finite connected graph~$\G = (\V, \E)$ representing
 a social network.
 The structure of the network is incorporated in the dynamics by assuming that only individuals located on vertices connected by an edge of the graph,
 that can be thought of as friends or business partners, may interact to exchange money.
 Note that the models in~\cite{dragulescu_yakovenko_2000, xi_ding_wang_2005} can be viewed as the particular cases where~$\G$ is a complete graph. \\
\indent The two models with debts studied in this paper are spatially explicit variants of the basic one-coin model introduced
 in~\cite{dragulescu_yakovenko_2000}, and studied analytically on general connected graphs in~\cite{lanchier_2017b}.
 Having a general finite connected graph~$\G = (\V, \E)$, the (spatial) one-coin model is a discrete-time Markov chain in which the state at time~$t$
 is a configuration
 $$ \xi_t : \V \to \N \quad \hbox{where} \quad \xi_t (x) = \hbox{number of coins agent~$x$ has at time~$t$}. $$
 In order to describe the dynamics, because the flow of money at each transaction is oriented from one vertex to another, it is convenient to define
 the set of directed edges
 $$ \vec{\E} = \{(x, y), (y, x) : \{x, y \} \in \E \}. $$
 At each time step, a directed edge, say~$(x, y) \in \vec{\E}$, is chosen uniformly at random, which results in the transfer of one coin
 from vertex~$x$ to vertex~$y$ if and only if there is a least one coin at~$x$ before the interaction.
 The restriction on the transfer reflects the fact that individuals cannot have debts.
 In contrast, the models introduced in~\cite{dragulescu_yakovenko_2000, xi_ding_wang_2005} allow the individuals to go into debt and have a negative
 number of coins.
 In particular, the state at time~$t$ is now
 $$ \xi_t : \V \to \Z \quad \hbox{where} \quad \xi_t (x) = \left\{\begin{array}{l}
        + \ \hbox{number of coins~$x$ has when~$\xi_t (x) \geq 0$} \vspace*{2pt} \\
        - \ \hbox{number of coins~$x$ borrowed when~$\xi_t (x) < 0$}. \end{array} \right. $$
 As in the basic one-coin model, a directed edge~$(x, y) \in \vec{\E}$ representing the flow of money (one coin) is chosen uniformly at random at each time step.
 The conditions under which the transaction indeed occurs are however different. \vspace*{5pt} \\
\noindent {\bf Model with individual debt limit}.
 In the model with debts introduced in~\cite{dragulescu_yakovenko_2000}, agents are allowed to borrow individually up to~$\il$ coins,
 which is modeled by assuming that the transaction indeed occurs if and only if the state at vertex~$x$ exceeds~$-\il$.
 In equations, letting~$(x, y)$ be the selected edge and~$\xi$ the configuration before the interaction, the configuration after the
 interaction is
 $$ \begin{array}{rcl}
    (\sigma_{x,y} \,\xi)(z) & \n = \n & \xi (z) + \ind \{\xi (x) > -\il \ \hbox{and} \ z = y \} \vspace*{4pt} \\ && \hspace*{45pt}
                                          - \ind \{\xi (x) > -\il \ \hbox{and} \ z = x \} \quad \hbox{for all} \quad z \in \V, \end{array} $$
 obtained by moving one coin from~$x$ to~$y$ if and only if the state at~$x$ exceeds~$-\il$.
 Recalling that, at each time step, a directed edge is chosen uniformly at random, and using that the total number of directed edges is twice
 the number of edges in~$\E$, the model with individual debt limit is the discrete-time Markov chain~$(X_t)$ with transition probabilities
 $$ P (X_{t + 1} = \sigma_{x,y} \,\xi \,| \,X_t = \xi) = \frac{1}{2 \card (\E)} \quad \hbox{for all} \quad (x, y) \in \vec{\E}. $$
\noindent {\bf Model with collective debt limit}.
 The model with collective debt limit introduced in~\cite{xi_ding_wang_2005} is more complicated.
 The money can be borrowed from a central bank that we represent by adding a vertex~$\star$ to the vertex set~$\V$, so the vertex set becomes
 $$ \V^{\star} = \V \cup \{\star \} \quad \hbox{where} \quad \star = \hbox{location of the central bank}. $$
 Though there is some flow of money between the bank and the individuals, the edge set is unchanged, i.e., there is no edge between the
 central bank and the individuals.
 As previously, a directed edge, say~$(x, y)$, is chosen at each time step.
 From the point of view of~$x$,
\begin{itemize}
 \item In case~$x$ has at least one coin, one coin moves from vertex~$x$ to vertex~$y$, which results in the state at vertex~$x$ to decrease by one. \vspace*{2pt}
 \item In case~$x$ has zero coin or is in debt, and there is at least one coin in the bank, $x$ takes one coin from the bank to give to~$y$, which
       results in the state at~$x$ to decrease by one. \vspace*{2pt}
 \item In case~$x$ has zero coin or is in debt, and the bank has no coin, nothing happens.
\end{itemize}
 From the point of view of~$y$,
\begin{itemize}
 \item In case~$y$ is not in debt before the interaction and indeed receives one coin, the state of the bank does not change further and the state
       at vertex~$y$ increases by one. \vspace*{2pt}
 \item In case~$y$ is in debt before the interaction and indeed receives one coin, $y$ gives one coin to the bank to reimburse part of her
       debt, so the state of the bank increases by one and the state at vertex~$y$ increases by one.
\end{itemize}
 Note in particular that, in case both~$x$ and~$y$ are in debt and there is at least one coin at the bank before the interaction, the bank gives one
 coin to~$x$, and~$y$ immediately gives it back to the bank so the state of the bank does not change.
 In equations, letting~$\xi$ be the configuration before the interaction, the configuration after the interaction is
 $$ \begin{array}{rcl}
    (\tau_{x,y} \,\xi)(z) & \n = \n & \xi (z) + \ind \{\max (\xi (x), \xi (\star)) > 0 \ \hbox{and} \ z = y \} \vspace*{4pt} \\ && \hspace*{45pt}
                                          - \ \ind \{\max (\xi (x), \xi (\star)) > 0 \ \hbox{and} \ z = x \} \quad \hbox{for all} \quad z \in \V \vspace*{8pt} \\
    (\tau_{x,y} \,\xi)(\star) & \n = \n & \xi (\star) + \ind \{\max (\xi (x), \xi (\star)) > 0 \ \hbox{and} \ \xi (x) > 0 \ \hbox{and} \ \xi (y) < 0 \} \vspace*{4pt} \\ && \hspace*{45pt}
                                          - \ \ind \{\max (\xi (x), \xi (\star)) > 0 \ \hbox{and} \ \xi (x) \leq 0 \ \hbox{and} \ \xi (y) \geq 0 \}. \end{array} $$
 In particular, using again that there are~$2 \card (\E)$ directed edges, the model with collective debt limit is the Markov chain~$(Y_t)$ with
 transition probabilities
 $$ P (Y_{t + 1} = \tau_{x,y} \,\xi \,| \,Y_t = \xi) = \frac{1}{2 \card (\E)} \quad \hbox{for all} \quad (x, y) \in \vec{\E}. $$
 From now on, we let~$N = \card (\V)$ be the number of vertices, which is also the number of individuals in the system.
 Note that, each time a transaction indeed occurs, the state of one vertex decreases by one, the state of another vertex increases by one, and the state
 of all the other vertices does not change.
 In particular, letting~$M$ be the initial number of coins in the population,
 $$ \sum_{z \in \V} \,X_t (z) = \sum_{z \in \V} \,Y_t (z) = M \quad \hbox{for all} \quad t > 0. $$
 The model with individual debt limit is thus characterized by the structure of the network, and the two parameters~$M$ and~$\il$.
 Similarly, letting~$\cl$ be the initial number of coins in the central bank, the model with collective debt limit is characterized by the structure
 of the network, and the two parameters~$M$ and~$\cl$.
 Finally, the average number of coins per individual is denoted by~$T = M/N$, and called the money temperature by analogy with the notion of
 temperature in physics.


\section{Main results}
\label{paper3:results}
 As previously mentioned, the numerical simulations of the model with individual debt limit on the complete graph
 performed in~\cite{dragulescu_yakovenko_2000}, i.e., the model in which all pairs of individuals are equally likely to be selected, suggest
 that the limiting distribution of money approaches a shifted exponential distribution in the large population/temperature limits.
 The gray histogram in Figure~\ref{fig:individual} shows the distribution of money obtained from numerical simulations that we have reproduced.
 More precisely, the conjecture in~\cite{dragulescu_yakovenko_2000} states that, when the number of individuals~$N$ and the money
 temperature~$T = M/N$ are large, we have the approximation
\begin{equation}
\label{eq:dist-individual}
  \lim_{t \to \infty} P (X_t (x) = c) \approx f_X (c) = \frac{1}{T + \il} \ \exp \left(- \ \frac{c + \il}{T + \il} \right).
\end{equation}
 The black curve in Figure~\ref{fig:individual} shows the graph of the density function~$f_X$.
 Note that this distribution is similar to the distribution of money in the one-coin model except that the money temperature~$T$ is
 replaced by~$T + \il$, and the range is shifted by~$-\il$.
 This shows that the model with individual debt limit behaves essentially like the one-coin model in which the average number of coins
 per individual is now~$T + \il$, due to the fact that each individual can use~$\il$ additional coins.
 Using reversibility of the stochastic process to identify the stationary distribution and some basic combinatorics to count the total
 number of admissible configurations, we get the following theorem.
\begin{figure}[t]
\centering
\scalebox{0.32}{\input{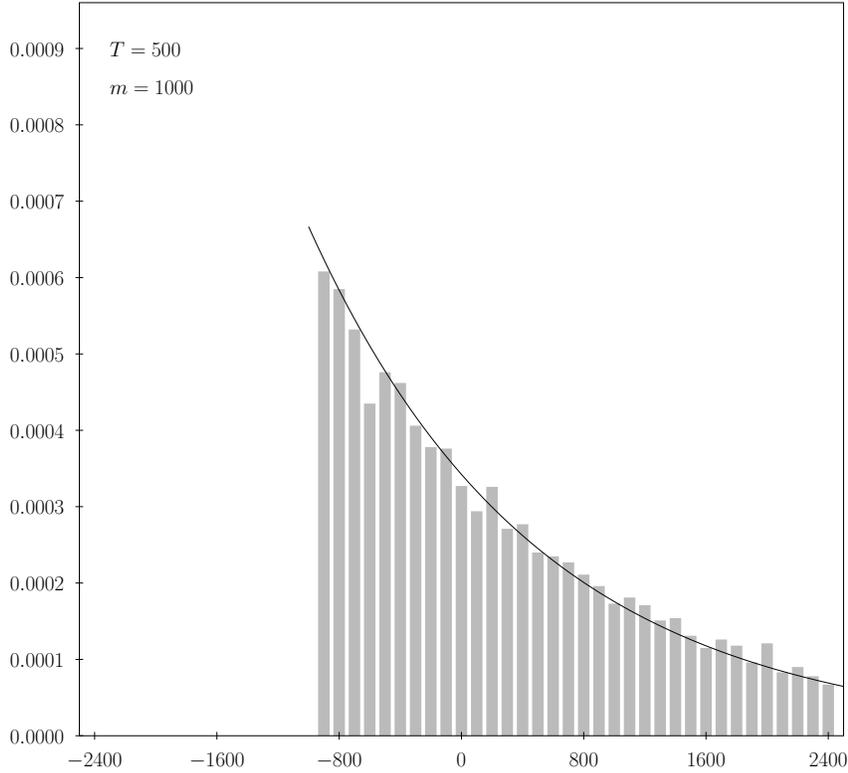}}
\caption{\upshape{Distribution of money for the model with individual debt limit~$\il = 1000$ and temperature~$T = 500$.
 The solid curve corresponds to the graph of the density function~\eqref{eq:dist-individual} while the gray histogram gives the distribution of money obtained
 from~$5 \times 10^{10}$ iterations of the process on the complete graph with~$N = 10,000$ vertices.}}
\label{fig:individual}
\end{figure}
\begin{theorem}[individual debt limit] --
\label{th:individual}
 For all connected graph~$\G$ with~$N$ vertices, regardless of the number~$M$ of coins, the limit~$\il$ and the initial configuration,
 $$ \lim_{t \to \infty} P (X_t (x) = c) = \frac{\Lambda_X (N - 1, M - c, \il)}{\Lambda_X (N, M, \il)} \quad \hbox{for all} \quad -\il \leq c \leq M + \il (N - 1) $$
 where the function~$\Lambda_X$ is given by
 $$ \Lambda_X (N, M, \il) = {M + \il N + N - 1 \choose N - 1} \quad \hbox{for all} \quad N, M, \il. $$
 In particular, when~$N$ and~$T = M/N$ are large,
 $$ \lim_{t\to\infty}P (X_{t} (x) = c) \approx f_X (c) \quad \hbox{for all} \quad -\il \leq c \leq M + \il (N - 1). $$
\end{theorem}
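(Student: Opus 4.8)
The plan is to identify the stationary distribution through reversibility and then read off the marginal at a vertex by a counting argument, exactly as the sentence preceding the theorem indicates. First I would fix the relevant state space. Since every transfer preserves both the total $\sum_z \xi(z) = M$ and the constraint $\xi(z) \geq -\il$ for all $z$, the chain $(X_t)$ lives on the finite set
$$ \mathcal{A} = \Big\{ \xi : \V \to \Z \ : \ \xi(z) \geq -\il \ \hbox{for all} \ z, \ \sum_{z \in \V} \xi(z) = M \Big\}. $$
I would check that $(X_t)$ is irreducible on $\mathcal{A}$ using connectedness of $\G$: any admissible configuration can be turned into any other by moving one coin at a time along paths, and one can always route the transfers so that the source vertex is strictly above $-\il$ at the moment it sends a coin (push a surplus coin forward along a path, so each intermediate source has just received a coin and hence sits above the limit). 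I would also note aperiodicity: whenever $\mathcal{A} \neq \emptyset$, which is equivalent to $M + \il N \geq 0$, there exists a configuration with some vertex at the debt limit $-\il$, and selecting that vertex as the source of a directed edge leaves the configuration unchanged, producing a self-loop.

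The key step is reversibility with respect to the uniform measure $\pi$ on $\mathcal{A}$. If a transfer along $(x,y)$ genuinely moves a coin, i.e. $\xi(x) > -\il$, then $\sigma_{x,y}\xi \in \mathcal{A}$ and the reverse move uses the directed edge $(y,x)$: since $(\sigma_{x,y}\xi)(y) = \xi(y) + 1 \geq -\il + 1 > -\il$, the reverse transfer indeed occurs and returns the chain to $\xi$. Both transitions have probability $1/(2\card(\E))$, so for the uniform measure the detailed balance equation $\pi(\xi)\,P(\xi,\eta) = \pi(\eta)\,P(\eta,\xi)$ collapses to the identity $P(\xi,\eta) = P(\eta,\xi)$, which holds (the remaining self-loop case $\xi(x) = -\il$ being trivial). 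Hence $\pi$ is reversible, therefore stationary, and by irreducibility and aperiodicity it is the limiting distribution regardless of the initial configuration. In particular $\lim_{t\to\infty} P(X_t(x) = c)$ equals the fraction of configurations in $\mathcal{A}$ with $\xi(x) = c$.

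Next I would count. Writing $\eta(z) = \xi(z) + \il \geq 0$ turns $\mathcal{A}$ into the set of nonnegative integer solutions of $\sum_z \eta(z) = M + \il N$ in $N$ variables, so stars and bars gives $\card(\mathcal{A}) = {M + \il N + N - 1 \choose N - 1} = \Lambda_X(N, M, \il)$. Fixing $\xi(x) = c$ leaves an admissible configuration on the remaining $N - 1$ vertices with total $M - c$, counted by $\Lambda_X(N - 1, M - c, \il)$; the constraints $c \geq -\il$ and $M - c \geq -\il(N-1)$ produce exactly the stated range $-\il \leq c \leq M + \il(N-1)$. Taking the ratio yields the exact formula.

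Finally, for the large population/temperature approximation I would observe that the shifted variable $\eta(x) = \xi(x) + \il$ has precisely the distribution of the number of coins at a vertex in the one-coin model with $M + \il N$ total coins, that is, at money temperature $T + \il$. Setting $S = M + \il N$ and $k = c + \il$, the exact formula is the binomial ratio ${S + N - 2 - k \choose N - 2} / {S + N - 1 \choose N - 1}$, whose asymptotics reduce to the computation already performed for the one-coin model in \cite{lanchier_2017b} and give convergence to $(T+\il)^{-1}\exp(-k/(T+\il)) = f_X(c)$. I expect this last asymptotic estimate of the binomial ratio to be the only genuinely delicate point; the reversibility and counting steps are elementary, the mild subtlety being the verification that irreducibility can be maintained without ever violating the debt constraint.
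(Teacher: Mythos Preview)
Your proposal is correct and follows essentially the same route as the paper: irreducibility, aperiodicity, reversibility yielding the uniform stationary law, stars-and-bars after the shift $\eta(z)=\xi(z)+\il$, and then the asymptotics. The only cosmetic difference is that for the last step you invoke the one-coin computation from~\cite{lanchier_2017b} via the shift $(c,T)\mapsto(c+\il,T+\il)$, whereas the paper writes out the product $\frac{N-1}{M+\il N+N-1}\prod_{k=1}^{N-2}\bigl(1-\frac{c+\il}{M+\il N+k}\bigr)$ and passes to the limit directly; these are the same calculation.
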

 The first part of the theorem gives an exact expression of the distribution of money for all possible parameters of the system, and
 for all possible connected graphs.
 The second part shows that taking the limit as~$N$ and~$T$ tend to infinity in this exact expression implies the conjecture~\eqref{eq:dist-individual}. \\
\indent We now look at the model with collective debt limit.
 Recall that the numerical simulations of the model on the complete graph in~\cite{xi_ding_wang_2005} suggest that the distribution of money
 at equilibrium now approaches an asymmetric Laplace distribution in the large population/temperature limits.
 The gray histogram in Figure~\ref{fig:collective} shows the distribution of money obtained from numerical simulations that we have reproduced.
 More precisely, the conjecture in~\cite{xi_ding_wang_2005} states that, in the large population limit, and when~$M/N$ and~$\cl/N$ are large, we have
 the approximation
\begin{equation}
\label{eq:dist-collective-1}
  \lim_{t \to \infty} P (X_t (x) = c) \approx f_Y (c) = \left\{\hspace*{-3pt} \begin{array}{lcl} K \,e^{- ac} & \hbox{for} & c \geq 0 \vspace*{3pt} \\
                                                                                                 K \,e^{+ bc}  & \hbox{for} & c \leq 0 \end{array} \right.
\end{equation}
 where, letting~$\rho = \cl/M$,
\begin{equation}
\label{eq:dist-collective-2}
   K \sim \frac{1}{T} \bigg(\sqrt{1 + \rho} - \sqrt{\rho} \bigg)^2 \quad
   a \sim \frac{1}{T} \bigg(1 - \sqrt{\frac{\rho}{1 + \rho}} \bigg) \quad
   b \sim \frac{1}{T} \bigg(\sqrt{\frac{1 + \rho}{\rho}} - 1 \bigg).
\end{equation}
 The black curve in Figure~\ref{fig:collective} shows the graph of the density function~$f_Y$.
 As for the model with individual debt limit, using reversibility to identify the stationary distribution and some combinatorics to count the total
 number of admissible configurations leads to an exact expression of the distribution of money for all possible connected graphs and all
 possible~$M$ and~$\cl$.
 The combinatorial analysis, however, is more complicated for the model with collective debt limit.
\begin{theorem}[collective debt limit] --
\label{th:collective}
 For all connected graph~$\G$ with~$N$ vertices, regardless of the number~$M$ of coins, the limit~$\cl$ and the initial configuration,
 $$ \lim_{t \to \infty} P (Y_t (x) = c) = \left\{\begin{array}{ccl}
    \displaystyle \frac{\Lambda_Y (N - 1, M - c, \cl)}{\Lambda_Y (N, M, \cl)} & \hbox{for all} & 0 \leq c \leq M + \cl \vspace*{8pt} \\
    \displaystyle \frac{\Lambda_Y (N - 1, M - c, \cl + c)}{\Lambda_Y (N, M, \cl)} & \hbox{for all} & - \cl \leq c \leq 0 \end{array} \right. $$
 where the function~$\Lambda_Y$ is given by
 $$ \Lambda_Y (N, M, \cl) = \sum_{a = 0}^{\cl} \ \sum_{b = 0}^N {N \choose b} {a - 1 \choose b - 1} {M + a + N - b -1 \choose N - b - 1}. $$
\end{theorem}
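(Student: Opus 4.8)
The plan is to establish the stationary distribution via reversibility, then count admissible configurations combinatorially, being careful about the asymmetry introduced by the bank's finite capacity.

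First I would identify the stationary distribution. Following the approach used for Theorem~\ref{th:individual}, I would show that the Markov chain~$(Y_t)$ is reversible with respect to the uniform distribution on the set of admissible configurations. Let me think about what the state space is and why reversibility holds.

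Let me understand the dynamics carefully.**The plan is to** mirror the strategy behind Theorem~\ref{th:individual}: first use reversibility to show that the stationary distribution is uniform over the set of admissible configurations, then reduce the equilibrium probability $\lim_{t\to\infty}P(Y_t(x)=c)$ to a ratio of counts of admissible configurations, and finally derive the closed form for the counting function $\Lambda_Y$ by a careful combinatorial argument. Let me set up the state space. A configuration is a pair $(\xi,s)$ where $\xi:\V\to\Z$ records the agents' holdings and $s=\xi(\star)\ge 0$ is the amount currently sitting in the bank. The bank started with $\cl$ coins and coins only move between the bank and debtors, so the total debt in the population equals $\cl-s$; equivalently, if $D=-\sum_{z:\xi(z)<0}\xi(z)$ denotes the aggregate debt, then $0\le D\le\cl$ and $s=\cl-D$. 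Since $\sum_z\xi(z)=M$ is conserved and the bank's state is a deterministic function of $\xi$, the effective state space is the set of $\xi:\V\to\Z$ with $\sum_z\xi(z)=M$ and aggregate debt at most $\cl$. I expect the author to verify, edge by edge, that each admissible transition $\tau_{x,y}$ has an admissible inverse of equal probability $1/(2\,\card(\E))$, so that $(Y_t)$ is reversible with uniform stationary measure; connectedness of $\G$ then gives irreducibility on this state space, and aperiodicity follows as usual.

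Next I would convert the equilibrium probability into a counting ratio. Under the uniform stationary law, $\lim_{t\to\infty}P(Y_t(x)=c)$ equals the number of admissible configurations with $\xi(x)=c$, divided by $\Lambda_Y(N,M,\cl)$, the total number of admissible configurations. Fixing $\xi(x)=c$ and summing over the remaining $N-1$ coordinates, I must count configurations on $N-1$ agents carrying $M-c$ coins in total, subject to the aggregate-debt constraint. The delicate point is that the debt budget available to the other $N-1$ agents depends on the sign of $c$: if $c\ge 0$ then agent $x$ consumes none of the debt budget, leaving the full limit $\cl$, which yields the numerator $\Lambda_Y(N-1,M-c,\cl)$; if $c\le 0$ then agent $x$ already uses $-c$ units of debt, so the remaining agents share a budget of only $\cl+c$, giving $\Lambda_Y(N-1,M-c,\cl+c)$. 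This sign split is exactly the two-case formula in the statement, so the heart of the matter is producing the counting function $\Lambda_Y$ itself.

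The hard part will be the combinatorial identity for $\Lambda_Y(N,M,\cl)$. Here I would organize the count by the exact value $a$ of the aggregate debt, with $0\le a\le\cl$, and by the number $b$ of agents who are strictly in debt. The factor $\binom{N}{b}$ chooses which $b$ of the $N$ agents are the debtors. Distributing the total debt $a$ among these $b$ agents, each owing at least one coin (a strictly negative holding), is a positive composition of $a$ into $b$ parts, counted by $\binom{a-1}{b-1}$; this accounts for the middle binomial and explains why the $b=0$ term survives only when $a=0$. The remaining $N-b$ non-debtor agents hold nonnegative amounts summing to $M+a$ (since the total over all agents is $M$ and the $b$ debtors contribute $-a$), and the number of such nonnegative integer solutions is $\binom{M+a+N-b-1}{N-b-1}$, the final binomial. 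Summing over $a$ and $b$ then gives precisely the stated double sum. The main obstacle I anticipate is keeping the bookkeeping of signs and budget constraints consistent between the bank's bound and the per-agent debt, and in particular justifying rigorously that every configuration counted this way is genuinely reachable and stationary under $\tau_{x,y}$ — that is, that the admissible state space really is the full set $\{\xi:\sum\xi=M,\ \text{aggregate debt}\le\cl\}$ with no hidden parity or connectivity obstruction; I would handle this by an explicit reachability argument exploiting connectedness of $\G$.
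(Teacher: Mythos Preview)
Your proposal is correct and follows essentially the same approach as the paper: establish that the uniform distribution on $\D_{N,M,\cl}$ is stationary via irreducibility, aperiodicity, and reversibility (the paper's Lemmas~\ref{lem:Y-irreducible}, \ref{lem:Y-aperiodic}, \ref{lem:Y-reversible}), reduce $\lim_{t\to\infty}P(Y_t(x)=c)$ to a ratio of configuration counts with the sign-of-$c$ split on the remaining debt budget, and compute $\Lambda_Y(N,M,\cl)$ by conditioning on the total debt $a$ and the number $b$ of debtors with the three binomial factors you describe (the paper's Lemma~\ref{lem:Y-config}). The only minor presentational difference is that the paper first separates the no-debt case $a=0$ and then absorbs it into the double sum via the convention $\binom{-1}{-1}=1$, whereas you handle all $a\ge 0$ uniformly from the outset.
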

 In contrast with Theorem~\ref{th:individual}, we were not able to simplify the expression of~$\Lambda_Y$ sufficiently to prove that the distribution
 of money indeed converges to~\eqref{eq:dist-collective-1}--\eqref{eq:dist-collective-2} in the large population/temperature limits.
 However, using a computer program to plot the exact expression of the distribution of money found in the theorem (the black squares in Figure~\ref{fig:collective})
 shows an almost perfect fit with the~Laplace distribution (the solid curve in Figure~\ref{fig:collective}) found via numerical simulations
 in~\cite{xi_ding_wang_2005}, which strongly supports their conjecture.
 To further support the conjecture, we also give a partial proof of~\eqref{eq:dist-collective-2}, assuming that~\eqref{eq:dist-collective-1} holds,
 in the last section of this paper.
 More precisely, we prove that, at equilibrium, the number of coins in the bank is a strict supermartingale, which suggests that the number of
 coins in the bank divided by the collective debt limit~$\cl$ converges to zero as~$\cl \to \infty$.
 Assuming that this is indeed the case and that the distribution of money is indeed described by an asymmetric~Laplace distribution~\eqref{eq:dist-collective-1},
 we prove~\eqref{eq:dist-collective-2} rigorously. \\
\begin{figure}[t]
\centering
\scalebox{0.32}{\input{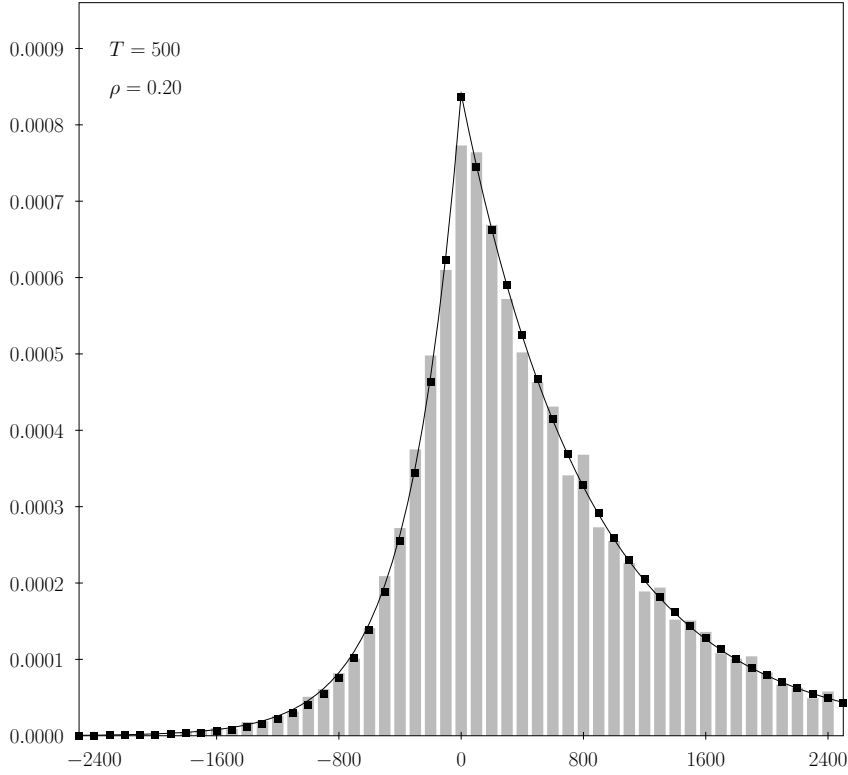}}
\caption{\upshape{Distribution of money for the model with collective debt limit with money temperature~$T = 500$ and parameter~$\rho = 0.20$.
 The solid curve corresponds to the graph of the density function~\eqref{eq:dist-collective-1}--\eqref{eq:dist-collective-2} while the black squares
 correspond to the distribution of money obtained from the exact expression in Theorem~\ref{th:collective} with~$N = 100$.
 The gray histogram gives the distribution of money obtained from~$5 \times 10^{10}$ iterations of the stochastic process on the complete graph with~$N = 10,000$
 vertices.}}
\label{fig:collective}
\end{figure}
\indent The rest of the paper is devoted to proofs and organized as follows.
 In the next section, we collect preliminary results for both models, showing the existence and uniqueness of their stationary distribution.
 We also use reversibility to prove that, at equilibrium, all the configurations are equally likely.
 In the following two sections, we use some combinatorics to obtain an explicit expression of the stationary distribution, from which
 Theorems~\ref{th:individual} and~\ref{th:collective} can be easily deduced.
 Finally, the last section gives the partial proof of~\eqref{eq:dist-collective-2} under the assumption that~\eqref{eq:dist-collective-1} holds.


\section{Ergodicity and reversibility}
\label{paper3:reversibility}
 This section collects some preliminary results about the models with individual debt limit and collective debt limit that will be useful later to prove our theorems.
 More precisely, we prove that both processes are irreducible, aperiodic and reversible.
 From now on, the set of all possible configurations of the model with individual debt limit is denoted by
 $$ \begin{array}{l} \C_{N, M, \il} = \{\xi : \V \to \Z : \sum_{x \in \V} \xi (x) = M \ \hbox{and} \ \xi (x) \geq -\il \ \hbox{for all} \ x \in \V \} \end{array} $$
 while the set of configurations of the model with collective debt limit is
 $$ \begin{array}{l} \D_{N, M, \cl} = \{\xi : \V \to \Z : \sum_{x \in \V} \xi (x) = M \ \hbox{and} \ \sum_{x \in \V} (- \xi (x)) \,\ind \{\xi (x) < 0 \} \leq \cl \}. \end{array} $$
 Because the proofs of irreducibility, aperiodicity, and reversibility are similar for both models, instead of studying both processes separately,
 we give the details of the proofs for the process with individual debt limit and then briefly explain how the proof can be adapted to show the analog
 for the process with collective debt limit.
 We start with irreducibility.
 Intuitively, the reason why the two processes are irreducible is because the graph is connected, which allows us to move coins from any vertex to any other vertices.
\begin{lemma}[irreducibility] --
\label{lem:X-irreducible}
 The process~$(X_t)$ is irreducible.
\end{lemma}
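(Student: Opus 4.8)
The plan is to prove that every configuration of the state space $\C_{N,M,\il}$ is reachable from every other in finitely many steps of positive probability; since $\C_{N,M,\il}$ is finite (each coordinate satisfies $-\il \leq \xi(x) \leq M + (N-1)\il$), this is exactly irreducibility. The key preliminary move is to pass to nonnegative \emph{token} variables by the shift $\tilde\xi(x) = \xi(x) + \il \geq 0$, so that $\sum_x \tilde\xi(x) = M + N\il$ and the transition along a directed edge $(x,y)$ is admissible precisely when $\tilde\xi(x) \geq 1$, in which case it moves one token from $x$ to $y$. In these variables the dynamics becomes a total-preserving token redistribution on the connected graph $\G$, and irreducibility reduces to transforming any nonnegative integer token configuration into any other with the same total.

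First I would establish the elementary operation of \emph{relaying} one token from any vertex $s$ with $\tilde\xi(s) \geq 1$ to any vertex $t$. Because $\G$ is connected, fix a path $s = v_0, v_1, \dots, v_k = t$ and move one coin successively from $v_0$ to $v_1$, then from $v_1$ to $v_2$, and so on. The first step is admissible since $\tilde\xi(v_0) \geq 1$; each intermediate vertex $v_j$ (for $1 \leq j \leq k-1$) has just received a coin immediately before it gives one away, so at that moment its token count is $\tilde\xi(v_j) + 1 \geq 1$ and the step is admissible. The net effect is that $s$ loses one coin, $t$ gains one, and every intermediate vertex is first raised by one and then lowered back, so it never drops below its initial value and the debt floor $-\il$ is respected throughout. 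Each of the $k$ steps has probability $1/(2\card(\E)) > 0$, so the entire relay occurs with positive probability.

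Then I would reach an arbitrary target $\eta$ from an arbitrary start $\xi$ by induction on the discrepancy $\Delta(\xi,\eta) = \sum_{x \in \V} |\xi(x) - \eta(x)|$. If $\Delta > 0$, then since $\sum_x \xi(x) = \sum_x \eta(x) = M$ there must exist a surplus vertex $s$ with $\xi(s) > \eta(s)$ and a deficit vertex $t$ with $\xi(t) < \eta(t)$. The surplus vertex satisfies $\xi(s) > \eta(s) \geq -\il$, so it carries at least one token and the relay operation applies; moving one coin from $s$ to $t$ keeps the configuration in $\C_{N,M,\il}$ (the total is preserved and $\xi(s)$ stays $\geq \eta(s) \geq -\il$) and lowers $\Delta$ by exactly $2$. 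Iterating drives $\Delta$ to $0$, so $\eta$ is reached from $\xi$ along a finite concatenation of positive-probability transitions, and $(X_t)$ is irreducible.

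The only genuinely delicate point—and the only place where connectedness is used—is the relay step: one must verify that threading a coin through intermediate vertices never violates the debt constraint. This is precisely what the ``receive-then-give'' ordering guarantees, so I expect it to be the main (though mild) obstacle, with everything else following by a routine discrepancy-reduction argument. As the text anticipates, the same scheme should transfer to $(Y_t)$ once the collective constraint defining $\D_{N,M,\cl}$ is accounted for in the feasibility check of each relay step.
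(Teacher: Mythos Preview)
Your argument is correct and is essentially the same as the paper's: both reduce the discrepancy $\sum_x |\xi(x)-\xi'(x)|$ by~$2$ at each stage by locating a surplus vertex and a deficit vertex and relaying one coin between them along a path in~$\G$, with the ``receive-then-give'' ordering ensuring each intermediate step is admissible. Your shift to nonnegative token variables $\tilde\xi(x)=\xi(x)+\il$ is a cosmetic reformulation of the same idea; the paper works directly with the debt floor~$-\il$ but the structure of the proof is identical.
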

\begin{proof}
 Letting~$\xi \neq \xi'$, the objective is to show that
\begin{equation}
\label{eq:X-irreducible-0}
  P (X_t = \xi' \,| \,X_0 = \xi) > 0 \quad \hbox{for some} \quad t \in \N.
\end{equation}
 The key is to use the following metric~$d$ on the set of configurations:
 $$ d (\xi,\xi') = \sum_{z \in \V} \,|\xi (z) - \xi' (z)| \quad \hbox{for all} \quad \xi, \xi' : \V \to \Z. $$
 Because~$\xi \neq \xi'$, there exist~$x, y \in \V$ such that
 $$ \xi (x) > \xi' (x) \quad \hbox{and} \quad \xi (y) < \xi' (y). $$
 Then, define the configuration~$\eta \in \C_{N, M, \il}$ as
 $$ \eta (x) = \xi (x) - 1, \quad \eta (y) = \xi (y) + 1, \quad \eta (z) = \xi (z) \ \hbox{for all} \ z \neq x, y $$
 and let~$\Gamma = (x, z_1, z_2, \ldots, z_s, y)$ be a self-avoiding path of the graph~$\G$ connecting~$x$ and~$y$.
 Note that such a path exists because the graph is connected.
 Note also that, because~$\Gamma$ is a self-avoiding path, the integer~$s$ is less than the diameter of the graph.
 Now, define recursively
 $$ \xi_1 = \tau_{x, z_1} \,\xi, \ \xi_2 = \tau_{z_1, z_2} \,\xi_1, \ldots, \ \xi_{i + 1} = \tau_{z_i, z_{i + 1}} \,\xi_i, \ldots, \ \xi_{s + 1} = \tau_{z_s, y} \,\xi_s. $$
 Because~$\xi (x) > \xi' (x) \geq -\il$, we can move a coin from~$x$ to~$z_1$, which gives~$\xi_1 (z_1) > -\il$.
 By using a simple induction, we deduce that~$\xi_i (z_i) > -\il$ for all~$i$.
 This also implies that we can move a coin~$s + 1$ times to bring it from~$x$ to~$y$ therefore~$\xi_{s + 1} = \eta$ and
\begin{equation}
\label{eq:X-irreducible-1}
  P (X_{s + 1} = \eta \,| \,X_0 = \xi) \geq P_{\xi} (X_1 = \xi_1) \ \prod_{i = 1}^s \,P_{\xi_i} (X_1 = \xi_{i + 1}) = \bigg(\frac{1}{2 \card (\E)} \bigg)^{s + 1}.
\end{equation}
 In addition, because~$\xi (x) > \xi' (x)$, we must have
\begin{equation}
\label{eq:X-irreducible-2}
  \begin{array}{rcl}
   |\eta (x) - \xi' (x)| & \n = \n & \eta (x) - \xi' (x) = (\xi (x) - 1) - \xi' (x) \vspace*{4pt} \\
                         & \n = \n & (\xi (x) - \xi' (x)) - 1 = |\xi (x) - \xi' (x)| - 1. \end{array}
\end{equation}
 Similarly, because~$\xi (y) < \xi' (y)$, we have
\begin{equation}
\label{eq:X-irreducible-3}
  \begin{array}{rcl}
   |\eta (y) - \xi' (y)| & \n = \n & \xi' (y) - \eta (y) = \xi' (y) - (\xi (y) + 1) \vspace*{4pt} \\
                         & \n = \n & (\xi' (y) - \xi (y)) - 1 = |\xi (y) - \xi' (y)| - 1. \end{array}
\end{equation}
 Combining~\eqref{eq:X-irreducible-2} and~\eqref{eq:X-irreducible-3}, we deduce that
\begin{equation}
\label{eq:X-irreducible-4}
  \begin{array}{rcl}
    d (\eta, \xi') & \n = \n & \displaystyle \sum_{z \in \V} \,|\eta (z) - \xi' (z)| \vspace*{4pt} \\
                   & \n = \n & \displaystyle \sum_{z \neq x, y} |\eta (z) - \xi' (z)| + |\eta (x) - \xi' (x)| + |\eta (y) - \xi' (y)| \vspace*{4pt} \\
                   & \n = \n & \displaystyle \sum_{z \neq x, y} |\eta (z) - \xi' (z)| + (|\xi (x) - \xi' (x)| - 1) + (|\xi (y) - \xi' (y)| - 1) \vspace*{4pt} \\
                   & \n = \n & \displaystyle \sum_{z \in \V} \,|\xi (z) - \xi' (z)| - 2 = d (\xi, \xi') - 2 < d (\xi, \xi'). \end{array}
\end{equation}
 Observing also that~$-\il \leq \xi (z) \leq M + \il (N - 1)$ for all~$z \in \V$, we have
\begin{equation}
\label{eq:X-irreducible-5}
  d (\xi, \xi') = \sum_{z \in \V} \,|\xi (z) - \xi' (z)| \leq \sum_{z \in \V} \,(M + N\il) \leq N (M + N\il)
\end{equation}
 for all~$\xi, \xi' \in \C_{N, M, \il}$.
 Combining~\eqref{eq:X-irreducible-1}, \eqref{eq:X-irreducible-4} and~\eqref{eq:X-irreducible-5}, we conclude that
 $$ P (X_t = \xi' \,| \,X_0 = \xi) \geq \bigg(\frac{1}{2 \card (\E)} \bigg)^{DN (M + N\il)} $$
 for some~$t \leq DN (M + N\il)$, where~$D$ is the diameter of the graph~$\G$.
 This shows that~\eqref{eq:X-irreducible-0} holds so the process is irreducible and the proof is done.
\end{proof}
\begin{lemma}[irreducibility] --
\label{lem:Y-irreducible}
 The process~$(Y_t)$ is irreducible.
\end{lemma}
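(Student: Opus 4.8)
The plan is to mirror the proof of Lemma~\ref{lem:X-irreducible} as closely as possible, keeping the same metric
$d(\xi,\xi') = \sum_{z \in \V} |\xi(z) - \xi'(z)|$ and the same strategy: given $\xi \neq \xi'$ in $\D_{N, M, \cl}$, pick $x, y \in \V$ with $\xi(x) > \xi'(x)$ and $\xi(y) < \xi'(y)$, relocate one coin from $x$ to $y$ along a self-avoiding path $\Gamma = (x, z_1, \ldots, z_s, y)$ by applying the maps $\tau_{x, z_1}, \tau_{z_1, z_2}, \ldots, \tau_{z_s, y}$ as in the previous proof, thereby reaching $\eta$ with $\eta(x) = \xi(x) - 1$, $\eta(y) = \xi(y) + 1$, and $\eta = \xi$ elsewhere, so that $d(\eta, \xi') = d(\xi, \xi') - 2$; iterating then brings the distance to zero. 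The only genuinely new ingredient is that a transaction may now fail to occur: on $\D_{N, M, \cl}$ the bank level is determined by the configuration, $\xi(\star) = \cl - \sum_z (-\xi(z)) \,\ind \{\xi(z) < 0\}$, and the move along $(u, v)$ transfers a coin if and only if $\max (\xi(u), \xi(\star)) > 0$, that is, if and only if $\xi(u) > 0$ \emph{or} the bank is nonempty. So I would first record the elementary bookkeeping that a transfer never pushes the total debt above $\cl$ (hence $\eta \in \D_{N, M, \cl}$), together with the key identity that when the travelling coin sits at an interior vertex $z_i$, the configuration $\xi - (\text{coin at } x) + (\text{coin at } z_i)$ has total debt $D(\xi) + \ind \{\xi(x) \leq 0\} - \ind \{\xi(z_i) \leq -1\}$.

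I would then split into two regimes. When the bank is nonempty at $\xi$, i.e. $D(\xi) < \cl$, every single move of the push is legal: the first move out of $x$ is allowed because the bank is nonempty, and at each interior step either $\xi(z_i) + 1 > 0$ (so the current holder is positive) or $\xi(z_i) \leq -1$, in which case the identity above gives total debt $\leq D(\xi) < \cl$ so the bank is still nonempty. Hence the whole relocation goes through and $d$ drops by two, exactly as for $(X_t)$.

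The obstacle is the bank-empty regime $D(\xi) = \cl$, in which a coin can only leave a \emph{strictly positive} vertex. Here I would use that $M \geq 0$: combined with $D(\xi) = \cl$, this forces the positive part $\sum_{\xi(z) > 0} \xi(z) = M + \cl > 0$, so a strictly positive vertex exists; and the central fact is that a coin can always be pushed \emph{from a positive vertex} to any other vertex. Indeed, starting a push from $x$ with $\xi(x) \geq 1$, the same debt identity shows every subsequent move is legal, since the current holder is either positive or has caused the debt to drop strictly below $\cl$. This push-from-a-positive-vertex fact is the engine that unlocks the bank-empty case.

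Rather than fight the bank-empty case inside the monotone-distance loop (a move that refills the bank may momentarily increase $d(\cdot, \xi')$, so termination is not immediate), I would organize the argument in three phases, all powered by the engine above: (i) repay all debts, routing coins from positive vertices onto negative vertices until a nonnegative configuration is reached; (ii) in the nonnegative regime every source used is positive and the bank is full, so coins move freely and any nonnegative configuration with total $M$ is reachable, exactly as in the debt-free one-coin model treated in Lemma~\ref{lem:X-irreducible}; (iii) recreate the debts of $\xi'$, noting that while building a total debt of at most $D(\xi') \leq \cl$ the bank stays nonempty until the very last coin, so each such move is legal. Chaining the three phases produces a positive-probability path from $\xi$ to $\xi'$, which gives irreducibility. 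I expect the main difficulty, entirely absent for $(X_t)$, to be exactly this bank-empty case: guaranteeing that a transaction can occur at all and that the detour used to free the bank terminates; the resolution is the observation that $M \geq 0$ forces a positive vertex and that a coin can always be pushed out of a positive vertex, which is what lets the argument route through a debt-free configuration.
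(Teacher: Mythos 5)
Your proof is correct, but it takes a genuinely different and more careful route than the paper, which disposes of Lemma~\ref{lem:Y-irreducible} by declaring the proof ``identical'' to that of Lemma~\ref{lem:X-irreducible} except for the cruder bound $d (\xi, \xi') \leq N (M + 2 \cl)$. The obstruction you isolate is real and is not addressed by the paper: for the process~$(Y_t)$ a transfer out of~$u$ is legal if and only if $\max (\xi (u), \xi (\star)) > 0$, where $\xi (\star) = \cl - \sum_z (- \xi (z)) \,\ind \{\xi (z) < 0 \}$ is a \emph{global} quantity, so the one-line justification of Lemma~\ref{lem:X-irreducible} (``$\xi (x) > \xi' (x) \geq - \il$, hence the first move is legal'') has no analogue. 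Concretely, on three vertices with $M = 0$ and $\cl = 3$, take $\xi = (-1, -2, 3)$ and $\xi' = (-2, -1, 3)$: the only vertex with $\xi > \xi'$ holds $-1 \leq 0$ coins and the bank is empty, so the greedy distance-decreasing move of Lemma~\ref{lem:X-irreducible} is forbidden, and one must first route a coin through the positive vertex, a step that does not decrease $d (\cdot, \xi')$. Your debt-bookkeeping identity and the resulting ``a coin can always be pushed out of a strictly positive vertex'' engine are exactly what is needed, and the three-phase scheme (repay all debts, rearrange in the nonnegative regime via the $\il = 0$ case of Lemma~\ref{lem:X-irreducible}, then rebuild the debts of $\xi'$ while the running debt stays below $\cl$) closes the gap correctly; note that phase (iii) can be obtained for free by running phase (i) from $\xi'$ and reversing the resulting path, since the one-step transitions are symmetric (this is the content of Lemma~\ref{lem:Y-reversible}). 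What the paper's shortcut buys is brevity and an explicit lower bound $(1 / (2 \card (\E)))^{D N (M + 2 \cl)}$ on a $t$-step transition probability; what your argument buys is an argument that actually survives the bank-empty regime. If you adopt your route, you should still record that each phase uses at most of order $N (M + \cl)$ pushes, each of length at most the diameter~$D$, so that the same kind of quantitative bound is recovered.
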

\begin{proof}
 The proof is identical to the proof of Lemma~\ref{lem:X-irreducible} with only exception:
 $$ -\cl \leq \xi (z) \leq M + \cl \quad \hbox{for all} \quad \xi \in \D_{N, M, \cl} \quad \hbox{and} \quad z \in \V. $$
 This implies that, for all~$\xi, \xi' \in \D_{N, M, \cl}$,
 $$ d (\xi, \xi') = \sum_{z \in \V} \,|\xi (z) - \xi' (z)| \leq \sum_{z \in \V} \,(M + 2\cl) \leq N (M + 2\cl) $$
 so the same argument as in the proof of Lemma~\ref{lem:X-irreducible} gives
 $$ P (X_t = \xi' \,| \,X_0 = \xi) \geq \bigg(\frac{1}{2 \card (\E)} \bigg)^{DN (M + 2\cl)} $$
 for some~$t \leq DN (M + 2\cl)$, where~$D$ is the diameter of the graph~$\G$.
\end{proof} \\ \\
 In view of irreducibility, in order to prove that the two processes are aperiodic, it suffices to identify a configuration that has period one,
 which is done in the next two lemmas.
\begin{lemma}[aperiodicity] --
\label{lem:X-aperiodic}
 The process~$(X_t)$ is aperiodic.
\end{lemma}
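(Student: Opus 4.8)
The plan is to leverage Lemma~\ref{lem:X-irreducible}: since $(X_t)$ is irreducible, all states belong to a single communicating class and therefore share a common period. Consequently, to prove aperiodicity it suffices to exhibit one configuration~$\xi \in \C_{N, M, \il}$ whose period is one, and the cleanest way to do this is to find a configuration admitting a self-loop, i.e. one for which $P (X_{t + 1} = \xi \,| \,X_t = \xi) > 0$.

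To manufacture such a self-loop, I would look for a configuration on which the transaction rule acts trivially. Recalling the definition of~$\sigma_{x,y}$, moving a coin along a directed edge~$(x, y)$ produces no change whatsoever precisely when the state at~$x$ fails to exceed~$-\il$; since every configuration in~$\C_{N, M, \il}$ satisfies~$\xi (x) \geq -\il$, this forces~$\xi (x) = -\il$. The strategy is thus to build a configuration with a vertex sitting exactly at the debt limit, and then select a directed edge leaving that vertex.

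Concretely, I would fix any vertex~$x \in \V$, set~$\xi (x) = -\il$, and distribute the remaining~$M + \il$ coins among the other~$N - 1$ vertices while keeping each of them at least~$-\il$; this is possible because the state space is nonempty (equivalently~$M \geq -N\il$, so that~$M + \il \geq -(N - 1)\il$), hence~$\xi \in \C_{N, M, \il}$. Since~$\G$ is connected with~$N \geq 2$ vertices, the vertex~$x$ has a neighbor~$y$, so~$(x, y) \in \vec{\E}$. Because~$\xi (x) = -\il$ does not exceed~$-\il$, the indicators in the definition of~$\sigma_{x,y}$ vanish and~$\sigma_{x,y} \,\xi = \xi$, whence
$$ P (X_{t + 1} = \xi \,| \,X_t = \xi) \geq \frac{1}{2 \card (\E)} > 0. $$
This shows that~$\xi$ has period one, and by irreducibility every state has period one, so~$(X_t)$ is aperiodic.

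I do not expect any serious obstacle in this argument: the reduction to a single state is immediate from irreducibility, and the self-loop construction is explicit. The only points requiring a little care are verifying that the state space is nonempty so that a configuration with a vertex pinned at the debt limit actually exists, and that this vertex has at least one neighbor; both follow from the standing assumption that~$\G$ is a connected graph on~$N \geq 2$ vertices carrying an admissible number~$M$ of coins. The same reasoning will then transfer verbatim to~$(Y_t)$, replacing~$\sigma_{x,y}$ by~$\tau_{x,y}$ and choosing a configuration with an empty bank and a vertex in debt so that the transaction again leaves the configuration unchanged.
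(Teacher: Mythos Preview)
Your proposal is correct and follows essentially the same approach as the paper's proof: exhibit a configuration with some vertex at the debt limit~$-\il$, observe that selecting a directed edge out of that vertex yields a self-loop, and conclude aperiodicity via irreducibility. Your version is slightly more explicit about why such a configuration exists and why the pinned vertex has a neighbor, but the core argument is identical.
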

\begin{proof}
 Let~$\xi \in \C_{N, M, \il}$ such that~$\xi (x) = -\il$ for some~$x \in \V$.
 Because the graph~$\G$ is connected, vertex~$x$ has at least one neighbor~$y \in \V$.
 Also, given that the directed edge~$(x, y)$ is the one chosen at the next time step, because vertex~$x$ has reached its debt limit, it cannot
 give any coin to~$y$ so the exchange of money is canceled.
 In particular,
 $$ P (X_{t + 1} = \xi \,| \,X_t = \xi) \geq P (\hbox{edge~$(x, y) \in \vec{\E}$ is selected}) = \frac{1}{2 \card (\E)} > 0. $$
 This shows that configuration~$\xi$ has period one.
 Because the process is irreducible, all the configurations must have the same period, therefore the process is aperiodic.
\end{proof}
\begin{lemma}[aperiodicity] --
\label{lem:Y-aperiodic}
 The process~$(Y_t)$ is aperiodic.
\end{lemma}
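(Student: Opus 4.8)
The plan is to mirror the proof of Lemma~\ref{lem:X-aperiodic}: by irreducibility (Lemma~\ref{lem:Y-irreducible}) all configurations of the chain share a common period, so it suffices to exhibit a single configuration $\xi \in \D_{N, M, \cl}$ of period one, i.e., a configuration for which $P(Y_{t+1} = \xi \mid Y_t = \xi) > 0$. Since each directed edge is selected with probability $1/(2\card(\E))$, this reduces to finding $\xi$ together with a directed edge $(x, y) \in \vec{\E}$ for which $\tau_{x,y}\,\xi = \xi$, that is, a transaction that is entirely canceled.

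Inspecting the update rule for $\tau_{x,y}$, the transaction directed from $x$ to $y$ leaves the configuration unchanged precisely when $\max(\xi(x), \xi(\star)) \leq 0$. Because the bank can never hold a negative number of coins, one has $\xi(\star) \geq 0$ for every admissible configuration, and $\xi(\star)$ equals $\cl$ minus the total debt $\sum_{z \in \V} (-\xi(z))\,\ind\{\xi(z) < 0\}$. Hence a transaction is canceled exactly when the bank is empty and the source vertex $x$ has a nonpositive balance. The key step is therefore to construct a configuration in which the bank is empty and at least one vertex is at or below zero.

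Such a configuration is easy to produce whenever $N \geq 2$ and the parameters make $\D_{N, M, \cl}$ nonempty, equivalently $M + \cl \geq 0$. I would fix a vertex $x$, set $\xi(x) = -\cl$, and place the remaining $M + \cl \geq 0$ coins on the other $N - 1$ vertices as nonnegative integers. Then $\sum_{z \in \V} \xi(z) = M$, the total debt equals $\cl$ so the bank is empty, i.e. $\xi(\star) = 0$, and $\xi(x) = -\cl \leq 0$; note that this single construction also covers the degenerate case $\cl = 0$, in which $x$ simply holds zero coins. By connectedness, $x$ has a neighbor $y \in \V$, and since $\max(\xi(x), \xi(\star)) = \max(-\cl, 0) = 0$ is not positive, selecting the directed edge $(x, y)$ gives $\tau_{x,y}\,\xi = \xi$. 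Consequently $P(Y_{t+1} = \xi \mid Y_t = \xi) \geq 1/(2\card(\E)) > 0$, so $\xi$ has period one and, by irreducibility, the chain $(Y_t)$ is aperiodic.

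I expect no serious obstacle here; the only point requiring slightly more care than in Lemma~\ref{lem:X-aperiodic} is that cancellation of a transaction now depends jointly on the emptiness of the bank and the sign of the balance at the source vertex, rather than on a single per-vertex debt ceiling. Once one records that $\xi(\star) \geq 0$ holds for every admissible configuration, the explicit configuration above simultaneously empties the bank and places a nonpositive balance at $x$, and the argument becomes essentially identical to the individual-debt-limit case.
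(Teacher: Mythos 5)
Your proposal is correct and follows essentially the same route as the paper: the paper's proof also starts from a configuration with $\xi(x) = -\cl$ and observes that the transaction out of $x$ is canceled, then invokes irreducibility to transfer period one to all states. You merely spell out the step the paper leaves implicit, namely that $\xi(x) = -\cl$ forces the total debt to equal $\cl$, hence the bank is empty and $\max(\xi(x), \xi(\star)) = 0$ is not positive.
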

\begin{proof}
 Starting with a configuration~$\xi \in \D_{N, M, \cl}$ such that~$\xi (x) = -\cl$ for some~$x \in \V$, and following the exact same reasoning as
 in the proof of Lemma~\ref{lem:X-aperiodic}, give the result.
\end{proof} \\ \\
 Irreducibility and aperiodicity, together with the fact that the number of configurations is finite, imply that, for both models, there
 exists a unique stationary distribution to which the process converges starting from any initial configuration.
 The next natural step is to find this stationary distribution for each process.
 In view of the large number of configurations and more importantly the fact that the agents are located on a general finite connected graph rather
 than a complete graph, writing the transition matrix in order to compute the stationary distribution looks impossible.
 However, one can easily find the stationary distribution observing that the processes are reversible and using the corresponding detailed
 balance equations.
\begin{lemma}[reversibility] --
\label{lem:X-reversible}
 The process~$(X_t)$ is reversible and
 $$ P (X_{t + 1} = \xi' \,| \,X_t = \xi) = P (X_{t + 1} = \xi \,| \,X_t = \xi') \quad \hbox{for all} \quad \xi, \xi' \in \C_{N, M, \il}. $$
\end{lemma}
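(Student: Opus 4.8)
The plan is to verify the displayed symmetry of one-step transition probabilities directly; since this identity is exactly the detailed balance equation for the uniform measure on~$\C_{N,M,\il}$, it simultaneously establishes reversibility and identifies the uniform distribution as the reversible (hence stationary) measure. I would first dispose of the case~$\xi = \xi'$, where both sides coincide trivially, and then concentrate on~$\xi \neq \xi'$.

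For distinct configurations, the guiding observation is that a single application of~$\sigma_{x,y}$ either fixes~$\xi$ (when~$\xi(x) \leq -\il$) or moves exactly one coin from~$x$ to~$y$. Hence if~$P(X_{t+1} = \xi' \mid X_t = \xi) > 0$ with~$\xi' \neq \xi$, there must be a directed edge~$(x,y) \in \vec{\E}$ with~$\xi(x) > -\il$ and~$\sigma_{x,y}\xi = \xi'$, and this edge is unique: $\xi'$ can differ from~$\xi$ only at~$x$ and~$y$, with~$\xi'(x) = \xi(x)-1$ and~$\xi'(y) = \xi(y)+1$. It follows that the forward probability is exactly~$1/(2\,\card(\E))$.

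The crux is to show that the reverse transition is admissible and carries the same weight. Since~$\{x,y\} \in \E$, the reversed directed edge~$(y,x)$ also lies in~$\vec{\E}$, and applying~$\sigma_{y,x}$ to~$\xi'$ would carry one coin from~$y$ back to~$x$, recovering~$\xi$---provided the debt constraint~$\xi'(y) > -\il$ holds. This is the one point where the debt limit could, a priori, break the symmetry, so it must be checked: because~$\xi \in \C_{N,M,\il}$ forces~$\xi(y) \geq -\il$, we obtain~$\xi'(y) = \xi(y)+1 \geq -\il + 1 > -\il$, and the reverse move is always permitted. By the same uniqueness argument, $(y,x)$ is the only directed edge sending~$\xi'$ to~$\xi$, so~$P(X_{t+1} = \xi \mid X_t = \xi') = 1/(2\,\card(\E))$ as well, matching the forward value.

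The remaining case~$P(X_{t+1}=\xi' \mid X_t = \xi) = 0$ is immediate from the contrapositive of the above: were the reverse transition to have positive probability, the forward one would too, so both vanish together. I expect the combinatorial bookkeeping (uniqueness of the mediating edge) to be entirely routine, and the only genuinely delicate step to be ruling out the asymmetry that the individual debt limit might introduce---which, as indicated, does not arise because a coin arriving at~$y$ lifts its holdings strictly above the debt floor.
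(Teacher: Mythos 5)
Your proposal is correct and follows essentially the same route as the paper's proof: identify the unique directed edge $(x,y)$ mediating a positive-probability transition, observe that $\xi'(y)=\xi(y)+1>-\il$ so the reverse move along $(y,x)$ is never blocked by the debt limit, and conclude both transition probabilities equal $1/(2\card(\E))$, with the zero case handled by symmetry/contrapositive exactly as in the paper. The only (harmless) difference is that you make the uniqueness of the mediating edge explicit, which the paper leaves implicit.
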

\begin{proof}
 The equations to be proved are obvious when~$\xi = \xi'$.
 To deal with the nontrivial case where the two configurations are different, we distinguish two scenarios:
 $$ \begin{array}{rl}
    \hbox{(a)} & P (X_{t + 1} = \xi' \,| \,X_t = \xi) > 0 \ \ \hbox{and} \ \ \xi \neq \xi' \vspace*{4pt} \\
    \hbox{(b)} & P (X_{t + 1} = \xi' \,| \,X_t = \xi) = 0 \ \ \hbox{and} \ \ \xi \neq \xi'. \end{array} $$
 Because the configurations that can be reached from~$\xi$ in one step are of the form~$\sigma_{x, y} \,\xi$, in the context of scenario~(a),
 we have~$\xi (x) > -\il$ and
\begin{equation}
\label{eq:X-reversible-1}
  \xi' (x) = \xi (x) - 1, \quad \xi' (y) = \xi (y) + 1, \quad \xi' (z) = \xi (z) \ \hbox{for all} \ z \neq x, y
\end{equation}
 for some~$(x, y) \in \vec{\E}$.
 In particular,
 $$ \xi' (y) = \xi (y) + 1 \geq -\il + 1 > -\il $$
 which, together with~\eqref{eq:X-reversible-1}, implies that~$\sigma_{y, x} \,\xi' = \xi$. Therefore,
 $$ \begin{array}{rcl}
      P (X_{t + 1} = \xi' \,| \,X_t = \xi) & \n = \n & P (X_{t + 1} = \sigma_{x, y} \,\xi \,| \,X_t = \xi) = 1 / (2 \card (\E)) \vspace*{4pt} \\
                                           & \n = \n & P (X_{t + 1} = \sigma_{y, x} \,\xi' \,| \,X_t = \xi') = P (X_{t + 1} = \xi \,| \,X_t = \xi') \end{array} $$
 because the probabilities of choosing directed edge~$(x, y)$ and directed edge~$(y, x)$ (out of all the possible~$2 \card (\E)$ directed edges) are equal.
 Now, in the context of scenario~(b), condition~\eqref{eq:X-reversible-1} does not hold for any of the directed edges.
 Equivalently,
 $$ \xi (y) = \xi' (y) - 1, \quad \xi (x) = \xi' (x) + 1, \quad \xi (z) = \xi' (z) \ \hbox{for all} \ z \neq x, y $$
 does not hold for any of the directed edge~$(y, x) \in \vec{\E}$ from which it follows that configuration~$\xi$ cannot be reached from~$\xi'$ in one step.
 In conclusion, in the context of scenario~(b),
 $$ P (X_{t + 1} = \xi' \,| \,X_t = \xi) = P (X_{t + 1} = \xi \,| \,X_t = \xi') = 0. $$
 In either case, $P (X_{t + 1} = \xi' \,| \,X_t = \xi) = P (X_{t + 1} = \xi \,| \,X_t = \xi')$.
\end{proof}
\begin{lemma}[reversibility] --
\label{lem:Y-reversible}
 The process~$(Y_t)$ is reversible and
 $$ P (Y_{t + 1} = \xi' \,| \,Y_t = \xi) = P (Y_{t + 1} = \xi \,| \,Y_t = \xi') \quad \hbox{for all} \quad \xi, \xi' \in \D_{N, M, \cl}. $$
\end{lemma}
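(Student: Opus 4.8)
The plan is to mirror the proof of Lemma~\ref{lem:X-reversible}, the only genuinely new difficulty being the bookkeeping at the bank. The starting point is the observation that the state of the bank is not an independent degree of freedom: since the bank begins with $\cl$ coins and has lent out exactly the total amount owed by the agents, one has $\xi(\star) = \cl - \sum_{z \in \V} (-\xi(z))\,\ind\{\xi(z) < 0\} \geq 0$ for every $\xi \in \D_{N,M,\cl}$. Consequently the bank's value is a deterministic function of the restriction of the configuration to $\V$, and to check that a reverse transition returns the chain to its previous state it suffices to follow the coins on~$\V$ and let the bank adjust automatically.

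As before, the case $\xi = \xi'$ is immediate, and I would treat the two scenarios (a)~$P(Y_{t+1} = \xi' \mid Y_t = \xi) > 0$ with $\xi \neq \xi'$, and (b)~this probability is zero with $\xi \neq \xi'$. In scenario~(a) we have $\xi' = \tau_{x,y}\,\xi$ for a unique directed edge $(x,y) \in \vec{\E}$ along which a transaction actually occurs, so that $\max(\xi(x), \xi(\star)) > 0$ and, on $\V$, $\xi'(x) = \xi(x) - 1$, $\xi'(y) = \xi(y) + 1$, with every other vertex unchanged. The edge is unique because $x$ and $y$ are the only two vertices at which $\xi$ and $\xi'$ differ.

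The heart of the argument is to show that the reverse edge $(y,x)$ also triggers a genuine transaction in the configuration $\xi'$, that is, $\max(\xi'(y), \xi'(\star)) > 0$; granting this, moving one coin from $y$ back to $x$ restores the $\V$-configuration $\xi$, and the bank follows suit through the deterministic relation above, so that $\tau_{y,x}\,\xi' = \xi$. To verify the condition I will split on the sign of $\xi(y)$. If $\xi(y) \geq 0$, then $\xi'(y) = \xi(y) + 1 \geq 1 > 0$ and $y$ can pay with one of her own coins. If $\xi(y) < 0$, then $\xi'(y) \leq 0$ and I must instead exhibit a coin in the bank: when $\xi(x) > 0$ the forward transaction had $y$ reimburse the bank, so $\xi'(\star) = \xi(\star) + 1 > 0$; when $\xi(x) \leq 0$ the forward move left the bank untouched, but the transaction occurred only because $\max(\xi(x), \xi(\star)) > 0$ forced $\xi(\star) > 0$, whence $\xi'(\star) = \xi(\star) > 0$. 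In every case $\max(\xi'(y), \xi'(\star)) > 0$, so the reverse move is enabled.

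It then follows that $P(Y_{t+1} = \xi' \mid Y_t = \xi) = P(Y_{t+1} = \xi \mid Y_t = \xi') = 1/(2\,\card(\E))$, since choosing $(x,y)$ and choosing $(y,x)$ are equally likely among the $2\,\card(\E)$ directed edges. Finally, the relation ``$\xi'$ is reachable from $\xi$ in one step'' (for $\xi \neq \xi'$) has just been shown to be symmetric, so its negation is symmetric as well, which settles scenario~(b), where both conditional probabilities vanish. The main obstacle is precisely the case analysis establishing $\max(\xi'(y), \xi'(\star)) > 0$: in the model with individual debt limit the corresponding step was a one-line inequality, whereas here the debt-reimbursement dynamics at the bank must be examined case by case.
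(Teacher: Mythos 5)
Your proof is correct and follows essentially the same route as the paper: the same two-scenario split, with the key step being the verification that $\max(\xi'(y), \xi'(\star)) > 0$ so that $\tau_{y,x}\,\xi' = \xi$. Your case analysis is organized by the sign of $\xi(y)$ rather than by the pair $(\xi(x), \xi(\star))$ as in the paper, but it covers the same cases and yields the same conclusion; your explicit remark that $\xi(\star)$ is determined by the configuration on $\V$ is a point the paper leaves implicit.
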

\begin{proof}
 As for irreducibility and aperiodicity, the proof of reversibility is quite similar for both processes, so we only focus on the differences.
 As previously, the detailed balanced equations are obvious when~$\xi = \xi'$ and the idea is again to distinguish between the two scenarios
 $$ \begin{array}{rl}
    \hbox{(a)} & P (Y_{t + 1} = \xi' \,| \,Y_t = \xi) > 0 \ \ \hbox{and} \ \ \xi \neq \xi' \vspace*{4pt} \\
    \hbox{(b)} & P (Y_{t + 1} = \xi' \,| \,Y_t = \xi) = 0 \ \ \hbox{and} \ \ \xi \neq \xi'. \end{array} $$
 In the context of scenario~(b), the same argument as in the proof of Lemma~\ref{lem:X-reversible} gives
 $$ P (Y_{t + 1} = \xi' \,| \,Y_t = \xi) = P (Y_{t + 1} = \xi \,| \,Y_t = \xi') = 0. $$
 In the context of scenario~(a), because the configurations that can be reached from~$\xi$ in one step are of the form~$\tau_{x, y} \,\xi$,
 we now have~$\max (\xi (x), \xi (\star)) > 0$ and
\begin{equation}
\label{eq:Y-reversible-1}
  \xi' (x) = \xi (x) - 1, \quad \xi' (y) = \xi (y) + 1, \quad \xi' (z) = \xi (z) \ \hbox{for all} \ z \neq x, y
\end{equation}
 for some~$(x, y) \in \vec{\E}$.
 Then, we have the following three implications:
 $$ \begin{array}{rcl}
    \xi (x) > 0 \ \ \hbox{and} \ \ \xi (\star) > 0    & \hbox{imply that} & \xi' (\star) \geq \xi (\star) > 0 \vspace*{4pt} \\
    \xi (x) \leq 0 \ \ \hbox{and} \ \ \xi (\star) > 0 & \hbox{imply that} & \xi' (y) > 0 \ \ \hbox{or} \ \ \xi' (\star) = \xi (\star) > 0 \vspace*{4pt} \\
    \xi (x) > 0 \ \ \hbox{and} \ \ \xi (\star) = 0    & \hbox{imply that} & \xi' (y) > 0 \ \ \hbox{or} \ \ \xi' (\star) = 1. \end{array} $$
 In either case, $\max (\xi' (y), \xi' (\star)) > 0$ which, together with~\eqref{eq:Y-reversible-1}, implies that~$\tau_{y, x} \,\xi' = \xi$.
 In particular, the two transition probabilities are equal:
 $$ \begin{array}{rcl}
      P (Y_{t + 1} = \xi' \,| \,Y_t = \xi) & \n = \n & P (Y_{t + 1} = \tau_{x, y} \,\xi \,| \,Y_t = \xi) = 1 / (2 \card (\E)) \vspace*{4pt} \\
                                           & \n = \n & P (Y_{t + 1} = \tau_{y, x} \,\xi' \,| \,Y_t = \xi') = P (Y_{t + 1} = \xi \,| \,Y_t = \xi'). \end{array} $$
 This completes the proof.
\end{proof}


\section{Proof of Theorem~\ref{th:individual}}
\label{paper3:individual}
 In this section, we prove the explicit expression for the distribution of money given in Theorem~\ref{th:individual} for the model with individual
 debt limit, and take the limit as~$N$ and~$T$ both tend to infinity to deduce and extend conjecture~\eqref{eq:dist-individual} to general finite
 connected graphs.
 As previously mentioned, irreducibility and aperiodicity of the model with individual debt limit proved in Lemmas~\ref{lem:X-irreducible}
 and~\ref{lem:X-aperiodic} and the fact that the number of configurations is finite imply that the process~$(X_t)$ has a unique stationary distribution,
 say~$\pi_X$, to which it converges starting from any initial configuration:
\begin{equation}
\label{eq:individual-1}
  \lim_{t \to \infty} P_{\eta} (X_t = \xi) = \pi_X (\xi) \quad \hbox{for all} \quad \xi, \eta \in \C_{N, M, \il}.
\end{equation}
 In addition, the detailed balanced equations in Lemma~\ref{lem:X-reversible} imply that, under the stationary distribution, all the configurations are
 equally likely.
 Indeed, the lemma shows that the transition matrix of the process is symmetric, and therefore doubly stochastic, from which it follows that
\begin{equation}
\label{eq:individual-2}
  \pi_X = \uniform (\C_{N, M, \il}).
\end{equation}
 Letting~$\Lambda_X (N, M, \il) = \card (\C_{N, M, \il})$ be the number of admissible configurations for the process with individual debt limit,
 it directly follows from~\eqref{eq:individual-1} and~\eqref{eq:individual-2} that, regardless of the initial configuration of the system and
 the choice of~$x \in \V$,
\begin{equation}
\label{eq:individual-3}
  \lim_{t \to \infty} P (X_{t} (x) = c) = \frac{\Lambda_X (N - 1, M - c, \il)}{\Lambda_X (N, M, \il)}
\end{equation}
 for all~$c = -\il, \ldots, M + \il (N - 1)$, because the numerator is equal to the number of configurations such that there are~$c$ coins at
 vertex~$x$, while the denominator is equal to the total number of configurations.
 In particular, to complete the proof of theorem, the next step is to count the number of configurations, which is done in the next lemma.
\begin{lemma} --
\label{lem:X-config}
 For all~$N, M$ and~$\il$, we have
 $$ \Lambda_X (N, M, \il) = {M + \il N + N - 1 \choose N - 1}. $$
\end{lemma}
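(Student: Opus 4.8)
The plan is to reduce the count to the classical stars-and-bars formula by a single change of variables that eliminates the debt constraint. Recall that a configuration $\xi \in \C_{N, M, \il}$ is an integer-valued function on the $N$ vertices satisfying $\sum_{x \in \V} \xi(x) = M$ together with the $N$ lower-bound constraints $\xi(x) \geq -\il$. The key observation is that the substitution $\eta(x) = \xi(x) + \il$ turns each constraint $\xi(x) \geq -\il$ into the constraint $\eta(x) \geq 0$, so that $\eta$ becomes an arbitrary $\N$-valued function on $\V$ with no upper restriction.

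Next I would track what the shift does to the total. Summing the identity $\eta(x) = \xi(x) + \il$ over all $N$ vertices gives $\sum_{x \in \V} \eta(x) = M + \il N$. The map $\xi \mapsto \eta$ is a bijection between $\C_{N, M, \il}$ and the set of functions $\eta : \V \to \N$ with $\sum_{x \in \V} \eta(x) = M + \il N$, its inverse being $\xi(x) = \eta(x) - \il$; hence counting the former is equivalent to counting the latter.

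Finally, the set of $\N$-valued functions on $N$ vertices with prescribed total $M + \il N$ is precisely the set of weak compositions of the integer $M + \il N$ into $N$ ordered parts, whose cardinality is given by the standard stars-and-bars count $\binom{(M + \il N) + N - 1}{N - 1}$. This equals $\binom{M + \il N + N - 1}{N - 1}$, the claimed value of $\Lambda_X(N, M, \il)$, which completes the argument.

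Since the reasoning is elementary, I do not anticipate any genuine obstacle; the only point deserving care is to confirm that $\eta(x) = \xi(x) + \il$ maps $\C_{N, M, \il}$ \emph{onto} the full set of weak compositions, so that no spurious upper bound on the values $\eta(x)$ is introduced. This holds because $\il$ is finite and the active constraints defining $\C_{N, M, \il}$ are exactly the lower bounds, leaving each $\eta(x)$ free to take any value in $\N$.
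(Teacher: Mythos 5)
Your proof is correct and follows essentially the same route as the paper: a shift of variables reducing the count to stars and bars (the paper shifts by $\il + 1$ to count positive solutions, you shift by $\il$ to count nonnegative ones, which is the same computation). No issues.
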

\begin{proof}
 By definition, $\Lambda_X (N, M, \il)$ is the number of integer solutions to
 $$ \xi (x_1) + \xi (x_2) + \cdots + \xi (x_N) = M \quad \hbox{with} \quad \xi (x_i) \geq -\il \ \hbox{for all} \ i. $$
 Letting~$\eta (x_i) = \xi (x_i) + \il + 1$, this is the number of integer solutions to
 $$ \eta (x_1) + \eta (x_2) + \cdots + \eta (x_N) = M + N (\il + 1) \quad \hbox{with} \quad \eta (x_i) \geq 1 \ \hbox{for all} \ i $$
 which is known to be the binomial coefficient
 $$ {M + \il N + N - 1 \choose N - 1}. $$
 See for instance~\cite[Figure 1.3]{lanchier_2017a}.
\end{proof} \\ \\
 Using~\eqref{eq:individual-3} and Lemma~\ref{lem:X-config}, we are now ready to complete the proof of Theorem~\ref{th:individual}, which not only
 shows conjecture~\eqref{eq:dist-individual} but also extends the conjecture to all finite connected graphs. \\ \\
\begin{proofof}{Theorem~\ref{th:individual}}
 According to Lemma~\ref{lem:X-config},
 $$ \Lambda_X (N, M, \il) = \frac{1}{(N - 1)!} \,\prod_{k = 1}^{N - 1} \,(M + \il N + k). $$
 Similarly, we have
 $$ \Lambda_X (N - 1, M - c, \il) = \frac{1}{(N - 2)!} \,\prod_{k = 1}^{N - 2} \,(M - c + \il N - \il + k). $$
 Taking the ratio, we get
 $$ \begin{array}{rcl}
    \displaystyle \frac{\Lambda_X (N - 1, M - c, \il)}{\Lambda_X (N, M, \il)} & \n = \n &
    \displaystyle \frac{(N - 1)!}{(M + \il N + N - 1)(N - 2)!} \,\prod_{k = 1}^{N - 2} \,\bigg(\frac{M - c + \il N - \il + k}{M + \il N + k} \bigg) \vspace*{4pt} \\ & \n = \n &
    \displaystyle \frac{N - 1}{M + \il N + N - 1} \,\prod_{k = 1}^{N - 2} \,\bigg(\frac{M + \il N + k - (c + \il)}{M + \il N + k} \bigg) \vspace*{4pt} \\ & \n = \n &
    \displaystyle \frac{N - 1}{M + \il N + N - 1} \,\prod_{k = 1}^{N - 2} \,\bigg(1 - \frac{c + \il}{M + \il N + k} \bigg). \end{array} $$
 Recalling that~$T = M/N$, in the limit as~$N, T \to \infty$,
 $$ \begin{array}{rcl}
    \displaystyle \frac{\Lambda_X (N - 1, M - c, \il)}{\Lambda_X (N, M, \il)} & \n \sim \n &
    \displaystyle \bigg(\frac{N}{N (T + \il)} \bigg) \bigg(1 - \frac{c + \il}{N (T + \il)} \bigg)^N \vspace*{8pt} \\ & \n \sim \n &
    \displaystyle \bigg(\frac{1}{T + \il} \bigg) \exp \bigg(- \frac{c + \il}{T + \il} \bigg). \end{array} $$
 This, together with~\eqref{eq:individual-3}, completes the proof of the theorem.
\end{proofof}


\section{Proof of Theorem~\ref{th:collective}}
\label{paper3:collective}
 This section is devoted to the proof of Theorem~\ref{th:collective} that describes the limiting behavior of the model with collective debt limit.
 Following the same argument as in Section~\ref{paper3:individual}, but using
 Lemmas~\ref{lem:Y-irreducible}, \ref{lem:Y-aperiodic} and~\ref{lem:Y-reversible} instead of
 Lemmas~\ref{lem:X-irreducible}, \ref{lem:X-aperiodic} and~\ref{lem:X-reversible}, we obtain that
\begin{equation}
\label{eq:collective-1}
  \lim_{t \to \infty} P_{\eta} (Y_t = \xi) = \pi_Y (\xi) \quad \hbox{for all} \quad \xi, \eta \in \D_{N, M, \cl}
\end{equation}
 where the (unique) stationary distribution~$\pi_Y$ is
\begin{equation}
\label{eq:collective-2}
  \pi_Y = \uniform (\D_{N, M, \cl}).
\end{equation}
 Now, given that there are~$c$ coins at vertex~$x$, there is a total of~$M - c$ coins distributed across the rest of the graph.
 In addition, the number of coins from the bank all the agents excluding~$x$ can borrow is~$\cl$ when~$c \geq 0$ but~$\cl + c$ when~$c < 0$.
 Hence, letting~$\Lambda_Y (N, M, \cl) = \card (\D_{N, M, \cl})$ and applying~\eqref{eq:collective-1} and~\eqref{eq:collective-2}, we
 deduce that, regardless of the initial configuration,
 $$ \lim_{t \to \infty} P (Y_{t} (x) = c) = \left\{\begin{array}{ccl}
    \displaystyle \frac{\Lambda_Y (N - 1, M - c, \cl)}{\Lambda_Y (N, M, \cl)} & \hbox{for all} & 0 \leq c \leq M + \cl \vspace*{8pt} \\
    \displaystyle \frac{\Lambda_Y (N - 1, M - c, \cl + c)}{\Lambda_Y (N, M, \cl)} & \hbox{for all} & -\cl \leq c \leq 0. \end{array} \right. $$
 In particular, to complete the proof of Theorem~\ref{th:collective}, it suffices to compute the number of configurations~$\Lambda_Y (N, M, \cl)$,
 which is done in the next lemma.
\begin{lemma} --
\label{lem:Y-config}
 For all~$N, M$ and~$\cl$,
 $$ \Lambda_Y (N, M, \cl) = \sum_{a = 0}^{\cl} \ \sum_{b = 0}^N {N \choose b} {a - 1 \choose b - 1} {M + a + N - b -1 \choose N - b - 1}. $$
\end{lemma}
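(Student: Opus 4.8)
The plan is to count the configurations in~$\D_{N, M, \cl}$ directly, organizing them according to two statistics: the total amount of debt and the number of agents in debt. Given a configuration~$\xi \in \D_{N, M, \cl}$, set
$$ a = \sum_{x \in \V} \,(- \xi (x)) \,\ind \{\xi (x) < 0 \} \quad \hbox{and} \quad b = \card \{x \in \V : \xi (x) < 0 \}, $$
so that~$a$ is the total debt carried by the system and~$b$ is the number of indebted vertices. By the definition of~$\D_{N, M, \cl}$, we have~$0 \leq a \leq \cl$, while trivially~$0 \leq b \leq N$. Because the pair~$(a, b)$ is a deterministic function of~$\xi$, the set~$\D_{N, M, \cl}$ is partitioned into the subsets of configurations having each admissible pair of values, and I would count each subset separately and then sum over all pairs.

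For fixed~$a$ and~$b$, a configuration with total debt~$a$ spread over exactly~$b$ indebted vertices is assembled in three independent steps. First, choose which~$b$ of the~$N$ (labeled) vertices are in debt, which can be done in~$N \choose b$ ways. Second, assign the strictly negative values to these~$b$ vertices: writing~$\xi (x) = - n_x$ with~$n_x \geq 1$ on the chosen set, the~$n_x$ must form an ordered tuple of~$b$ positive integers summing to~$a$, so the number of choices is the number of compositions of~$a$ into exactly~$b$ positive parts, namely~$a - 1 \choose b - 1$. Third, assign non-negative values to the remaining~$N - b$ vertices; since the whole configuration sums to~$M$ and the indebted vertices contribute~$- a$, the non-negative vertices must sum to~$M + a$, so a standard stars-and-bars count (as in Lemma~\ref{lem:X-config}) gives~$M + a + N - b - 1 \choose N - b - 1$ choices. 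By the multiplication principle, the number of configurations with statistics~$(a, b)$ is the product of these three factors, and summing over~$0 \leq a \leq \cl$ and~$0 \leq b \leq N$ yields the announced expression for~$\Lambda_Y (N, M, \cl)$.

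The step requiring the most care, and the one I expect to be the main bookkeeping obstacle, is the verification that the boundary cases are correctly captured by the binomial coefficients under the usual conventions~${n \choose -1} = 0$ for~$n \geq 0$ and~${-1 \choose -1} = 1$. When~$b = 0$ there is no debt, which forces~$a = 0$: here the middle factor~$a - 1 \choose b - 1$ equals~$-1 \choose -1 = 1$, matching the single empty composition of~$0$, while for~$a \geq 1$ the factor~$a - 1 \choose -1$ vanishes, as it should since a positive integer has no composition into zero parts. Symmetrically, when~$b = N$ all vertices are in debt and the non-negative block is empty, so the last factor~$M + a + N - b - 1 \choose N - b - 1 = {M + a - 1 \choose -1}$ correctly vanishes unless~$M + a = 0$. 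Once one checks that each such degenerate pair~$(a, b)$ contributes exactly the number of configurations it represents, the identity follows immediately from the disjoint partition of~$\D_{N, M, \cl}$ over the pairs~$(a, b)$ together with the three-step count above.
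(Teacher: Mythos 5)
Your proof is correct and follows essentially the same route as the paper: partition $\D_{N, M, \cl}$ by the total debt $a$ and the number $b$ of indebted agents, and count each cell by the same three-factor product $\binom{N}{b}\binom{a-1}{b-1}\binom{M+a+N-b-1}{N-b-1}$. The only (cosmetic) difference is that the paper first splits off the no-debt configurations $\D^+_{N, M, \cl}$, counts them via Lemma~\ref{lem:X-config}, and then reabsorbs that term as the $a = b = 0$ summand using the same binomial conventions you verify at the end.
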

\begin{proof}
 Introducing the set
 $$ \D_{N, M, \cl}^+ = \{\xi \in \D_{N, M, \cl} : \xi (x) \geq 0 \ \hbox{for all} \ x \in \V \} $$
 and its complement~$\D_{N, M, \cl}^- = \D_{N, M, \cl} \setminus \D_{N, M, \cl}^+$, we have
\begin{equation}
\label{eq:Y-config-1}
  \Lambda_Y (N, M, \cl) = \card (\D_{N, M, \cl}) = \card (\D_{N, M, \cl}^+) + \card (\D_{N, M, \cl}^-).
\end{equation}
 Because~$\D_{N, M, \cl}^+$ is the set of configurations with nobody in debt, $\D_{N, M, \cl} = \C_{N, M, 0}$.
 In particular, a direct application of Lemma~\ref{lem:X-config} implies that
\begin{equation}
\label{eq:Y-config-2}
  \card (\D^+_{N, M, \cl}) = \card (\C_{N, M, 0}) = \Lambda_X (N, M, 0) = {M + N - 1 \choose N - 1}.
\end{equation}
 To count the number of configurations in~$\D_{N, M, \cl}^-$, we let
 $$ \begin{array}{rcl}
    \phi (N, M, \cl, a, b) & \n = \n &
    \hbox{number of configurations in~$\D_{N, M, \cl}$ with~$a$ coins} \vspace*{2pt} \\ & \n &
    \hbox{borrowed from the bank and where the debt} \vspace*{2pt} \\ & \n &
    \hbox{is shared among~$b$ individuals}. \end{array} $$
 Then, we have the following decomposition:
\begin{equation}
\label{eq:Y-config-3}
  \card (\D_{N, M, \cl}^-) = \sum_{a = 1}^{\cl} \sum_{b = 1}^{a \wedge (N - 1)} \phi (N, M, \cl, a, b).
\end{equation}
 Now, observe that
\begin{itemize}
 \item There are~$N$ choose~$b$ ways to choose the individuals in debt. \vspace*{4pt}
 \item Following the same reasoning as in the proof of Lemma~\ref{lem:X-config}, there are~$a - 1$ choose~$b - 1$ ways to distribute the debt
       among those individuals, \vspace*{4pt}
 \item Similarly, there are~$(M + a) + (N - b) - 1$ choose~$N - b - 1$ ways to distribute the~$M + a$ coins among the remaining~$N - b$ individuals.
\end{itemize}
 This implies that
\begin{equation}
\label{eq:Y-config-4}
  \phi (N, M, \cl, a, b) = {N \choose b} {a - 1 \choose b - 1} {M + a + N - b - 1 \choose N - b - 1}.
\end{equation}
 Finally, combining~\eqref{eq:Y-config-1}--\eqref{eq:Y-config-4} and using the convention
 $$ {-1 \choose -1} = 1 \quad \hbox{and} \quad {n \choose k} = 0 \quad \hbox{when} \quad n < k \ \ \hbox{or} \ \ k < 0 \leq n $$
 we conclude that
 $$ \begin{array}{rcl}
    \Lambda_Y (N, M, \cl) & \n = \n &
    \displaystyle {M + N - 1 \choose N - 1} + \sum_{a = 1}^{\cl} \,\sum_{b = 1}^{a \wedge (N - 1)} \phi (N, M, \cl, a, b) \vspace*{4pt} \\ & \n = \n &
    \displaystyle {M + N - 1 \choose N - 1} + \sum_{a = 1}^{\cl} \,\sum_{b = 1}^{a \wedge (N - 1)} {N \choose b} {a - 1 \choose b - 1} {M + a + N - b - 1 \choose N - b - 1} \vspace*{4pt} \\ & \n = \n &
    \displaystyle \sum_{a = 0}^{\cl} \ \sum_{b = 0}^N {N \choose b} {a - 1 \choose b - 1} {M + a + N - b -1 \choose N - b - 1}. \end{array} $$
 This completes the proof.
\end{proof}


\section{Partial proof of the conjecture~\eqref{eq:dist-collective-2}}
\label{paper3:heuristics}
 As previously mentioned, the plot (using a computer) of the explicit expression for the distribution of money proved in Theorem~\ref{th:collective}
 strongly suggests convergence to the asymmetric Laplace distribution conjectured in~\cite{xi_ding_wang_2005}.
 In this section, we give more evidence that this conjecture is true.
 More precisely, we assume convergence to an asymmetric Laplace distribution
\begin{equation}
\label{eq:heuristics-1}
  f_Y (c) = \left\{\hspace*{-3pt} \begin{array}{lcl} K \,e^{- ac} & \hbox{for} & c \geq 0 \vspace*{3pt} \\
                                                     K \,e^{+ bc}  & \hbox{for} & c \leq 0 \end{array} \right.
\end{equation}
 and argue that the three parameters~$K, a$ and~$b$ are indeed given by~\eqref{eq:dist-collective-2}.
 To compute these three parameters, the basic idea is to derive a system of three equations involving these parameters.
 The next lemma gives two such equations.
\begin{lemma} --
\label{lem:density-mean}
 For all~$N, M$ and~$\cl$, we have
 $$ \frac{K}{a} + \frac{K}{b} = 1 \quad \hbox{and} \quad \frac{K}{a^2} - \frac{K}{b^2} = \frac{M}{N}. $$
\end{lemma}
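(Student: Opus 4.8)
The plan is to obtain the two relations from the two most basic constraints satisfied by any probability density: it integrates to one, and its first moment equals the known mean of the money distribution. Neither step requires any combinatorics; the content of the lemma is really just a pair of elementary exponential integrals, together with one structural input (conservation of money) that pins down the mean.

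First I would impose normalization. Since $f_Y$ is, under the standing assumption \eqref{eq:heuristics-1}, the limiting density of the number of coins held by a typical agent, it must satisfy $\int_{-\infty}^\infty f_Y(c)\,dc = 1$. Splitting the integral at $c = 0$ and using the piecewise form of $f_Y$,
$$ \int_{-\infty}^0 K \,e^{bc}\,dc + \int_0^\infty K \,e^{-ac}\,dc = \frac{K}{b} + \frac{K}{a}, $$
where $a, b > 0$ are forced by integrability of the two tails. Setting this equal to one yields the first relation $K/a + K/b = 1$.

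Second I would compute the mean of $f_Y$ and match it to $M/N$. The key observation is that the total number of coins is conserved, $\sum_{z \in \V} Y_t (z) = M$, so taking expectations under the stationary distribution and using that the marginal law of $Y_\infty (x)$ is the same for every vertex $x$ (as exhibited by the vertex-independent formula of Theorem~\ref{th:collective}), linearity of expectation gives $N \,E[Y_\infty (x)] = M$, i.e. the mean number of coins per agent equals $M/N = T$. Assuming this limiting law is $f_Y$, its mean must therefore be $M/N$, and computing it from the density,
$$ \int_0^\infty c \,K \,e^{-ac}\,dc + \int_{-\infty}^0 c \,K \,e^{bc}\,dc = \frac{K}{a^2} - \frac{K}{b^2}, $$
via the standard integrals $\int_0^\infty c \,e^{-ac}\,dc = 1/a^2$ and $\int_{-\infty}^0 c \,e^{bc}\,dc = -1/b^2$, gives the second relation $K/a^2 - K/b^2 = M/N$.

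The integrals themselves are routine, so the only genuinely delicate point, and the one I would be most careful to state cleanly, is the justification that the first moment of $f_Y$ equals $M/N$. For the exact finite system this is immediate: the state space $\D_{N, M, \cl}$ is finite (coins bounded by $M + \cl$, debts by $\cl$), so $Y_t (x)$ is bounded and stationary means are exactly $M/N$ by conservation and vertex symmetry. The subtlety is that $f_Y$ is only the continuous large-population/temperature approximation, so equating its mean with the discrete mean $M/N$ is exact only in that limit; under the standing assumption \eqref{eq:heuristics-1} we simply impose this matching, and the second equation follows.
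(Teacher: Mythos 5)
Your proof is correct and follows essentially the same route as the paper: the first equation from normalization of the density $f_Y$, and the second from matching the mean of $f_Y$ to $M/N$ via conservation of the total number of coins. The extra care you take in justifying that the stationary mean per agent is exactly $M/N$ (vertex symmetry plus the finite state space) is a welcome clarification but does not change the argument.
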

\begin{proof}
 The first equation directly follows from the fact that, because~$f_Y$ is a density function, its integral must be equal to one, while computing
 this integral gives
 $$ \int_{\R} f_Y (x) \,dx = \int_{\R_-} Ke^{+ bx} \,dx + \int_{\R_+} Ke^{- ax} \,dx = \frac{K}{a} + \frac{K}{b}. $$
 The second equation follows from looking at the mean number of coins per individual.
 Because each interaction has either no effect or moves one coin from a vertex~$x$ to a vertex~$y$, which results in
 the state at~$x$ to decrease by one and the state at~$y$ to decrease by one, we have
\begin{equation}
\label{eq:density-mean-1}
  \sum_{z \in \V} \,Y_t (z) = \sum_{z \in \V} \,Y_0 (z) = M \quad \hbox{for all} \quad t \in \R_+.
\end{equation}
 But our assumption that the distribution of money converges to the asymmetric Laplace distribution given in~\eqref{eq:heuristics-1} also implies that
\begin{equation}
\label{eq:density-mean-2}
  \begin{array}{rcl}
  \displaystyle \lim_{t \to \infty} E \bigg(\frac{1}{N} \sum_{z \in \V} Y_t (z) \bigg) & \n = \n &
  \displaystyle \lim_{t \to \infty} E (Y_t (x)) =
  \displaystyle \int_{\R} x f_Y (x) \,dx \vspace*{4pt} \\ & \n = \n &
  \displaystyle \int_{\R_-} Kxe^{+ bx} \,dx + \int_{\R_+} Kxe^{- ax} \,dx = \frac{K}{a^2} - \frac{K}{b^2}. \end{array}
\end{equation}
 Combining~\eqref{eq:density-mean-1} and~\eqref{eq:density-mean-2} gives the second equation.
\end{proof} \\ \\
 To find a third equation involving the parameters~$K, a$ and~$b$, the next step is to argue that the number of coins in the bank, when rescaled by its
 initial value~$\cl$, tends to zero as time goes to infinity.
 More precisely, we believe that
\begin{equation}
\label{eq:heuristics-3}
  \lim_{t \to \infty} \frac{Y_t (\star)}{\cl} \to 0 \quad \hbox{as} \quad N, M/N, \cl/N \to \infty.
\end{equation}
 The next lemma shows that, as long as there is at least one coin in the bank, the number of coins in the bank behaves like a supermartingale,
 which suggests that~\eqref{eq:heuristics-3} is true.
\begin{lemma} --
\label{lem:supermartingale}
 For all~$N, M, \cl$, we have
 $$ \lim_{t \to \infty} E (Y_{t + 1} (\star) - Y_t (\star) \,| \,Y_t (\star) > 0) < 0. $$
\end{lemma}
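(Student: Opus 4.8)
The plan is to compute the conditional drift of the bank exactly and show it is strictly negative. The starting point is the observation that the number of coins in the bank is a deterministic function of the configuration: since every loan decreases the bank by one while increasing the total debt by one, and every reimbursement does the reverse, the quantity $Y_t(\star) + \sum_{x \in \V}(-Y_t(x))\,\ind\{Y_t(x) < 0\}$ is conserved and equals~$\cl$. Consequently the event $\{Y_t(\star) > 0\}$ coincides with $\{Y_t \in \D_{N, M, \cl - 1}\}$, i.e.\ the total debt is at most $\cl - 1$. Two consequences matter: first, conditioned on this event every selected directed edge $(x,y)$ produces an actual transaction, because $\max(\xi(x), \xi(\star)) \geq \xi(\star) > 0$; second, since the stationary law is uniform on $\D_{N, M, \cl}$ by Lemmas~\ref{lem:Y-irreducible}, \ref{lem:Y-aperiodic} and~\ref{lem:Y-reversible}, conditioning on $\{Y_t(\star) > 0\}$ yields, in the limit $t \to \infty$, the uniform distribution on $\D_{N, M, \cl - 1}$.

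Next I would compute the one-step drift given a fixed $\xi$ with $\xi(\star) > 0$. Reading off the transition rule for $(\tau_{x,y}\xi)(\star)$ and dropping the now-automatic factor $\ind\{\max(\xi(x), \xi(\star)) > 0\}$, the change along the directed edge $(x,y)$ is $\ind\{\xi(x) > 0,\, \xi(y) < 0\} - \ind\{\xi(x) \leq 0,\, \xi(y) \geq 0\}$. The key simplification is that this difference equals $\ind\{\xi(x) > 0\} + \ind\{\xi(y) < 0\} - 1$, which one verifies by substituting $\ind\{\xi(x) \leq 0\} = 1 - \ind\{\xi(x) > 0\}$ and $\ind\{\xi(y) \geq 0\} = 1 - \ind\{\xi(y) < 0\}$ and expanding. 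Summing over the $2\card(\E)$ directed edges, the tail term gives $\sum_x \deg(x)\,\ind\{\xi(x) > 0\}$, the head term gives $\sum_x \deg(x)\,\ind\{\xi(x) < 0\}$, and the constant gives $-2\card(\E)$; since $\sum_x \deg(x) = 2\card(\E)$, these combine into
$$ E\big(Y_{t+1}(\star) - Y_t(\star) \mid Y_t = \xi\big) = -\,\frac{1}{2\card(\E)} \sum_{x \in \V} \deg(x)\,\ind\{\xi(x) = 0\}, $$
a quantity that is always nonpositive and vanishes precisely when no vertex holds exactly zero coins.

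Finally I would take $t \to \infty$. Using the identification of the conditional equilibrium law with the uniform distribution on $\D_{N, M, \cl - 1}$ from the first step, the limiting drift becomes
$$ \lim_{t \to \infty} E\big(Y_{t+1}(\star) - Y_t(\star) \mid Y_t(\star) > 0\big) = -\,\frac{1}{2\card(\E)}\, E_{\xi}\bigg(\sum_{x \in \V} \deg(x)\,\ind\{\xi(x) = 0\}\bigg), $$
where $\xi$ is uniform on $\D_{N, M, \cl - 1}$. To conclude strict negativity it suffices that the expectation on the right be strictly positive. Since the graph is connected, $\deg(x) \geq 1$ for every $x$, so this expectation dominates $P(\xi(x_0) = 0)$ for any fixed vertex $x_0$, and that probability is positive because there is at least one admissible configuration in $\D_{N, M, \cl - 1}$ with $\xi(x_0) = 0$ (for instance, place zero coins at $x_0$ and distribute the remaining $M$ coins among the other vertices within the debt budget $\cl - 1$).

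I expect the main obstacle to be the algebraic collapse in the second step: recognizing that the signed pair of indicators telescopes into the separable form $\ind\{\xi(x) > 0\} + \ind\{\xi(y) < 0\} - 1$ is exactly what makes the edge sum tractable through the handshake identity, and without it the drift would not visibly reduce to a degree-weighted count of zero-vertices. A secondary point needing care is the justification, resting on the conservation identity for $Y_t(\star)$ noted at the outset, that conditioning the uniform law on $\D_{N, M, \cl}$ by $\{\xi(\star) > 0\}$ returns precisely the uniform law on $\D_{N, M, \cl - 1}$.
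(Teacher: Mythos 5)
Your proof is correct and takes essentially the same approach as the paper: both arguments reduce the conditional drift of the bank to minus the probability that the tail of the selected directed edge holds exactly zero coins, with the only cancellation occurring between the $(+,-)$ and $(-,+)$ interaction types. The differences are cosmetic: your identity $uv-(1-u)(1-v)=u+v-1$ performs that cancellation algebraically per configuration, yielding the exact pointwise drift $-\frac{1}{2\card(\E)}\sum_{x}\deg(x)\,\ind\{\xi(x)=0\}$ (a mild strengthening, together with your explicit identification of the conditional equilibrium law with the uniform law on $\D_{N,M,\cl-1}$), whereas the paper obtains the same cancellation from the symmetry $p(\ep_1,\ep_2)=p(\ep_2,\ep_1)$ of the two orientations of each edge.
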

\begin{proof}
 Letting~$(x, y) \in \vec{\E}$ be the oriented edge selected at time~$t$, we define
 $$ J_t = (\sign (Y_t (x)), \sign (Y_t (y))) = \hbox{type of interaction at time~$t$}. $$
 In words, the random variable~$J_t$ keeps track of whether the agent~$x$ selected to give a coin and the agent~$y$ selected to receive
 a coin are in debt (state~$-$), have zero coin (state~0), or have a surplus of coins (state~$+$).
 There are~$3^2 = 9$ types of interactions and, as long as there is at least one coin in the bank, the type of interaction determines whether
 the number of coins in the bank decreases, stays the same, or increases. More precisely,
\begin{equation}
\label{eq:supermartingale-1}
  \begin{array}{rclcl}
    Y_{t + 1} (\star) & \n = \n & Y_t (\star) - 1 & \hbox{when} & J_t = (-, 0), (-, +), (0, 0), (0, +) \vspace*{4pt} \\
    Y_{t + 1} (\star) & \n = \n & Y_t (\star)     & \hbox{when} & J_t = (-, -), (0, -), (+, 0), (+, +) \vspace*{4pt} \\
    Y_{t + 1} (\star) & \n = \n & Y_t (\star) + 1 & \hbox{when} & J_t = (+, -). \end{array}
\end{equation}
 Now, define the corresponding conditional probabilities
 $$ p (\ep_1, \ep_2) = \lim_{t \to \infty} P (J_t = (\ep_1, \ep_2) \,| \,Y_t > 0) \quad \hbox{for all} \quad \ep_1, \ep_2 \in \{-, 0, + \}. $$
 Because edges~$(x, y)$ and~$(y, x)$ are equally likely to be chosen,
\begin{equation}
\label{eq:supermartingale-2}
  p (\ep_1, \ep_2) = p (\ep_2, \ep_1)  \quad \hbox{for all} \quad \ep_1, \ep_2 \in \{-, 0, + \}.
\end{equation}
 Also, using convergence to~$\pi_Y = \uniform (\D_{N, M, \cl})$ and that, regardless of the number of coins in the bank, there is
 a positive fraction of configurations with zero coin at a given vertex,
\begin{equation}
\label{eq:supermartingale-3}
  p (0, -) + p (0, 0) + p (0, +) = \lim_{t \to \infty} P (Y_t (x) = 0 \,| \,Y_t (\star) > 0) > 0.
\end{equation}
 Combining~\eqref{eq:supermartingale-1}--\eqref{eq:supermartingale-3}, we deduce that
 $$ \begin{array}{l}
    \lim_{t \to \infty} E (Y_{t + 1} (\star) - Y_t (\star) \,| \,Y_t (\star) > 0) \vspace*{4pt} \\ \hspace*{20pt} = \
      p (+, -) - p (-, 0) - p(-, +) - p (0, 0) - p (0, +) \vspace*{4pt} \\ \hspace*{20pt} = \
      - \ (p (-, 0) + p (0, 0) + p (0, +)) = - \lim_{t \to \infty} P (Y_t (x) = 0 \,| \,Y_t (\star) > 0) < 0. \end{array} $$
 This completes the proof.
\end{proof} \\ \\
 The previous lemma shows that the process~$(Y_t (\star))$ stopped at the time it reaches zero is a supermartingale.
 Because the inequality in the lemma is strict, this suggests that~\eqref{eq:heuristics-3} holds.
 To make the proof perfectly rigorous, we would need to prove a slightly stronger result, namely that the conditional expectation is less
 than a negative constant that does not depend on the parameters of the system.
 Numerical simulations of the model with collective debt limit support this result, revealing that the number of coins in the bank drops
 quickly and then fluctuates around values that are negligible compared to~$\cl$.
 Moving forward with our heuristic argument, we now assume that~\eqref{eq:heuristics-3} holds in order to derive a third equation involving
 the parameters~$K, a$ and~$b$.
\begin{lemma} --
\label{lem:third-equation}
 Assume that~\eqref{eq:heuristics-3} holds. Then,
 $$ \frac{K}{a^2} \sim \frac{M + \cl}{N} \quad \hbox{as} \quad N, M/N, \cl/N \to \infty. $$
\end{lemma}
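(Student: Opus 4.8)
The plan is to obtain the missing relation among~$K$, $a$ and~$b$ by computing, in two different ways, the total amount that the population has borrowed from the bank at equilibrium. The starting point is a conservation law linking the bank to the aggregate debt. Writing
$$ D_t = \sum_{z \in \V} \,(- Y_t (z)) \,\ind \{Y_t (z) < 0 \} $$
for the total debt carried by the agents at time~$t$, I would first check that every admissible transaction changes~$Y_t (\star)$ and~$D_t$ by opposite amounts. This is a short case analysis on the type of interaction~$J_t$ from the proof of Lemma~\ref{lem:supermartingale}: a coin leaves the bank exactly when the giver~$x$ is not in surplus and the receiver~$y$ is not in debt, a coin enters the bank exactly when~$x$ is in surplus and~$y$ is in debt, and in each of the four sign combinations of~$(Y_t (x), Y_t (y))$ the net change of~$D_t$ is the opposite of that of~$Y_t (\star)$. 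Hence~$Y_t (\star) + D_t$ is constant in time, equal to its initial value~$\cl$, so that~$D_t = \cl - Y_t (\star)$ at all times.

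With this identity, assumption~\eqref{eq:heuristics-3} translates directly into an asymptotic for the mean debt. Since~$0 \leq Y_t (\star) / \cl \to 0$ as~$N, M/N, \cl/N \to \infty$, dividing~$D_t = \cl - Y_t (\star)$ by~$\cl$ and taking expectations gives~$E (D_t) / \cl \to 1$, that is~$E (D_t) \sim \cl$. On the other hand, I would evaluate the same quantity using the assumed limiting distribution~\eqref{eq:heuristics-1}: at equilibrium the~$N$ agents are exchangeable and each carries an amount of money with density~$f_Y$, so the expected debt of a single agent is
$$ \int_{\R_-} (- x) \,K e^{+ bx} \,dx = \frac{K}{b^2}, $$
and summing over the population gives~$E (D_t) \sim N \cdot K / b^2$. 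Comparing the two evaluations yields the third equation~$K / b^2 \sim \cl / N$.

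It then remains to combine this with the second identity of Lemma~\ref{lem:density-mean}, namely~$K / a^2 - K / b^2 = M / N$, which at once gives
$$ \frac{K}{a^2} = \frac{M}{N} + \frac{K}{b^2} \sim \frac{M}{N} + \frac{\cl}{N} = \frac{M + \cl}{N}, $$
as claimed. The one genuinely delicate point---and the place where the argument stays at the level of a heuristic---is the passage from the pointwise statement~\eqref{eq:heuristics-3} to~$E (D_t) \sim \cl$, together with the simultaneous interchange of the~$t \to \infty$ limit with the asymptotics in~$N, M/N, \cl/N$; making this step rigorous would require uniform integrability of~$Y_t (\star) / \cl$, which is precisely the control that the discussion following Lemma~\ref{lem:supermartingale} identifies as still missing. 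Everything else, i.e. the conservation law and the two moment computations, is elementary.
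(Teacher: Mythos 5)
Your argument is correct and rests on the same idea as the paper's proof: balance an aggregate moment of the configuration against the corresponding moment of the assumed Laplace density, using the near-emptiness of the bank from~\eqref{eq:heuristics-3}. The only difference is that you work with the negative part (total debt~$D_t \sim \cl$, hence~$K/b^2 \sim \cl/N$) and then convert via the second identity of Lemma~\ref{lem:density-mean}, whereas the paper works directly with the positive part ($\sum_z Y_t (z) \,\ind \{Y_t (z) \geq 0 \} \sim M + \cl$, hence~$K/a^2 \sim (M + \cl)/N$ in one step); the two routes are interchangeable since the paper itself immediately recovers~$K/b^2 \sim \cl/N$ from its version in the proof of Lemma~\ref{lem:solve-system}. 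A small merit of your write-up is that you make explicit the conservation law~$Y_t (\star) + D_t = \cl$, which is exactly the fact the paper leaves implicit when it asserts that the agents not in debt share about~$M + \cl$ coins; your closing caveat about interchanging limits and uniform integrability applies equally to the paper's version and is fair.
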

\begin{proof}
 The trick to establish the lemma is now to look at the mean number of coins among the individuals who have no debt.
 Due to~\eqref{eq:heuristics-3}, the set of all individuals with no debt share a total of about~$M + \cl$ coins at equilibrium.
 In equation, this can be written as
\begin{equation}
\label{eq:third-equation-1}
  \lim_{t \to \infty} \ \sum_{z \in \V} \,Y_t (z) \,\ind \{Y_t (z) \geq 0 \} \sim M + \cl.
\end{equation}
 In other respects, our assumption that the distribution of money converges to the Laplace distribution given in~\eqref{eq:heuristics-1} implies that
\begin{equation}
\label{eq:third-equation-2}
  \begin{array}{l}
  \displaystyle \lim_{t \to \infty} E \bigg(\frac{1}{N} \sum_{z \in \V} Y_t (z) \,\ind \{Y_t (z) \geq 0 \} \bigg) \vspace*{0pt} \\ \hspace*{80pt} = \
  \displaystyle \int_{\R} x f_Y (x) \,\ind \{x \geq 0 \} \,dx =
  \displaystyle \int_{\R_+} Kxe^{- ax} \,dx = \frac{K}{a^2}. \end{array}
\end{equation}
 Combining~\eqref{eq:third-equation-1} and~\eqref{eq:third-equation-2} proves the lemma.
\end{proof} \\ \\
 Using Lemmas~\ref{lem:density-mean} and~\ref{lem:third-equation}, we can now prove~\eqref{eq:dist-collective-2} that we recall in the next lemma.
\begin{lemma} --
\label{lem:solve-system}
 Let~$\rho = \cl / M$. Then, as~$N, M/N, \cl/N \to \infty$,
 $$ K \sim \frac{1}{T} \bigg(\sqrt{1 + \rho} - \sqrt{\rho} \bigg)^2 \quad
    a \sim \frac{1}{T} \bigg(1 - \sqrt{\frac{\rho}{1 + \rho}} \bigg) \quad
    b \sim \frac{1}{T} \bigg(\sqrt{\frac{1 + \rho}{\rho}} - 1 \bigg). $$
\end{lemma}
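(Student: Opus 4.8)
The plan is to treat the three relations furnished by Lemmas~\ref{lem:density-mean} and~\ref{lem:third-equation} as a system in the unknowns $K$, $a$, $b$ and solve it explicitly in the limit $N, M/N, \cl/N \to \infty$. The key organizing idea is to work not with $a$ and $b$ directly but with the two second-moment quantities $P = K/a^2$ and $Q = K/b^2$, since these are precisely what the mean-type equations pin down. Writing $\cl = \rho M$ so that $(M + \cl)/N = T(1 + \rho)$, Lemma~\ref{lem:third-equation} gives $P \sim T(1 + \rho)$; the second equation of Lemma~\ref{lem:density-mean}, namely $P - Q = M/N = T$, then yields $Q \sim T\rho$ at once. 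Thus both $P$ and $Q$ are determined before $K$, $a$, $b$ are disentangled.

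First I would extract $K$ from the normalization equation $K/a + K/b = 1$. The trick is to observe that $K/a = \sqrt{K \cdot (K/a^2)} = \sqrt{KP}$ and likewise $K/b = \sqrt{KQ}$, so that the normalization linearizes in $\sqrt{K}$ as $\sqrt{K}\,(\sqrt{P} + \sqrt{Q}) = 1$. Substituting $P \sim T(1+\rho)$ and $Q \sim T\rho$ gives $\sqrt{KT}\,(\sqrt{1+\rho} + \sqrt{\rho}) \sim 1$, hence $KT \sim (\sqrt{1+\rho} + \sqrt{\rho})^{-2}$. Rationalizing via the conjugate identity $(\sqrt{1+\rho} + \sqrt{\rho})(\sqrt{1+\rho} - \sqrt{\rho}) = 1$ collapses this into $K \sim \frac{1}{T}(\sqrt{1+\rho} - \sqrt{\rho})^2$, which is the claimed formula for $K$.

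With $K$, $P$ and $Q$ in hand, I would recover $a$ and $b$ by inverting the definitions. From $a^2 = K/P$ one obtains $a^2 \sim \frac{(\sqrt{1+\rho}-\sqrt{\rho})^2}{T^2(1+\rho)}$, so that $a \sim \frac{1}{T}(1 - \sqrt{\rho/(1+\rho)})$; similarly $b^2 = K/Q \sim \frac{(\sqrt{1+\rho}-\sqrt{\rho})^2}{T^2\rho}$ gives $b \sim \frac{1}{T}(\sqrt{(1+\rho)/\rho} - 1)$. These are exactly the three expressions in~\eqref{eq:dist-collective-2}, completing the derivation.

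There is no deep obstacle here; the substance is elementary algebra once the system has been assembled from the preceding lemmas. The one point requiring care is that all three relations are asymptotic ($\sim$) rather than exact, so I would keep every manipulation at leading order and check that the scalings are mutually consistent: each of $K$, $a$, $b$ turns out to be of order $1/T$, with a dimensionless prefactor depending only on $\rho$, which is what legitimizes composing the asymptotics. The only mildly clever moves are the substitution $K/a = \sqrt{KP}$ that linearizes the normalization equation in $\sqrt{K}$, and the conjugate rationalization that turns $(\sqrt{1+\rho}+\sqrt{\rho})^{-2}$ into $(\sqrt{1+\rho}-\sqrt{\rho})^2$.
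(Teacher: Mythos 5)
Your proposal is correct and follows essentially the same route as the paper: both first pin down $K/a^2 \sim (M+\cl)/N = T(1+\rho)$ and $K/b^2 \sim \cl/N = T\rho$ from Lemma~\ref{lem:third-equation} and the second equation of Lemma~\ref{lem:density-mean}, then feed these into the normalization $K/a + K/b = 1$ to extract $K$, and finally back out $a$ and $b$, using the conjugate identity $(\sqrt{1+\rho}+\sqrt{\rho})(\sqrt{1+\rho}-\sqrt{\rho})=1$ to reach the stated forms. Your reformulation via $P = K/a^2$, $Q = K/b^2$ and the observation $K/a = \sqrt{KP}$ is only a cosmetic reordering of the paper's algebra, so there is nothing to flag.
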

\begin{proof}
 Lemma~\ref{lem:third-equation} and the second equation in Lemma~\ref{lem:density-mean} imply that
 $$ \frac{K}{a^2} \sim \frac{M + \cl}{N} \quad \hbox{and} \quad
    \frac{K}{b^2} = \frac{K}{a^2} - \bigg(\frac{K}{a^2} - \frac{K}{b^2} \bigg) \sim \frac{M + \cl}{N} - \frac{M}{N} = \frac{\cl}{N}. $$
 In particular, some basic algebra gives
\begin{equation}
\label{eq:solve-system-1}
  a \sim \sqrt{\frac{KN}{M + \cl}} \quad \hbox{and} \quad b \sim \sqrt{\frac{KN}{\cl}}
\end{equation}
 which, together with the first equation in Lemma~\ref{lem:density-mean}, implies that
\begin{equation}
\label{eq:solve-system-2}
  \frac{1}{K} = \frac{1}{a} + \frac{1}{b} \sim \sqrt{\frac{M + \cl}{KN}} + \sqrt{\frac{\cl}{KN}} \quad \hbox{and} \quad
           K \sim \bigg(\frac{\sqrt N}{\sqrt{M + \cl} + \sqrt{\cl}} \bigg)^2.
\end{equation}
 Combining~\eqref{eq:solve-system-1} and~\eqref{eq:solve-system-2}, we deduce that
\begin{equation}
\label{eq:solve-system-3}
  \begin{array}{rcl}
    a & \n \sim \n & \displaystyle \bigg(\frac{\sqrt N}{\sqrt{M + \cl} + \sqrt{\cl}} \bigg) \sqrt{\frac{N}{M + \cl}} = \frac{1}{\sqrt{M + \cl}} \,\bigg(\frac{N}{\sqrt{M + \cl} + \sqrt{\cl}} \bigg) \vspace*{8pt} \\
    b & \n \sim \n & \displaystyle \bigg(\frac{\sqrt N}{\sqrt{M + \cl} + \sqrt{\cl}} \bigg) \sqrt{\frac{N}{\cl}} = \frac{1}{\sqrt{\cl}} \,\bigg(\frac{N}{\sqrt{M + \cl} + \sqrt{\cl}} \bigg). \end{array}
\end{equation}
 Recalling~$T = M/N$ and~$\rho = \cl/M$, and using~\eqref{eq:solve-system-2} and~\eqref{eq:solve-system-3}, we get
 $$ \begin{array}{rcl}
      K & \n \sim \n & \displaystyle \frac{1}{T} \bigg(\frac{1}{\sqrt{1 + \rho} + \sqrt{\rho}} \bigg)^2 = \frac{1}{T} \bigg(\sqrt{1 + \rho} - \sqrt{\rho} \bigg)^2 \vspace*{8pt} \\
      a & \n \sim \n & \displaystyle \frac{1}{T} \,\sqrt{\frac{1}{1 + \rho}} \bigg(\frac{1}{\sqrt{1 + \rho} + \sqrt{\rho}} \bigg) = \frac{1}{T} \bigg(1 - \sqrt{\frac{\rho}{1 + \rho}} \bigg) \vspace*{8pt} \\
      b & \n \sim \n & \displaystyle \frac{1}{T} \,\sqrt{\frac{1}{\rho}} \bigg(\frac{1}{\sqrt{1 + \rho} + \sqrt{\rho}} \bigg) = \frac{1}{T} \bigg(\sqrt{\frac{1 + \rho}{\rho}} - 1 \bigg). \end{array} $$
 This completes the proof.
\end{proof}



\begin{thebibliography}{100}

\bibitem{chakraborti_chakrabarti_2000}
 Chakraborti, A. and Chakrabarti, B. K. (2000). Statistical mechanics of money: how saving propensity affects its distribution. \emph{Eur. Phys. J.} B \textbf{17} 167--170.

\bibitem{dragulescu_yakovenko_2000}
 Dragulescu, A. A. and Yakovenko, V. M. (2000). Statistical mechanics of money. \emph{Eur. Phys. J.} B \textbf{17} 723--729.

\bibitem{heinsalu_patriarca_2014}
 Heinsalu, E. and Patriarca, M. (2014). Kinetic models of immediate exchange. \emph{Eur. Phys. J.} B \textbf{87} 170--179.

\bibitem{katriel_2015}
 Katriel, G. (2015). The immediate exchange model: an analytical investigation. \emph{Eur. Phys. J.} B \textbf{88} 19--24.

\bibitem{lanchier_2017a}
 Lanchier, N. (2017). Stochastic modeling. Universitext, \emph{Springer, Cham}, 2017. xiii + 303.

\bibitem{lanchier_2017b}
 Lanchier, N. (2017). Rigorous proof of the Boltzmann-Gibbs distribution of money on connected graphs. \emph{J. Stat. Phys.} \textbf{167} 160--172.

\bibitem{lanchier_reed_2018}
 Lanchier, N. and Reed, S. (2018). Rigorous results for the distribution of money on connected graphs. Preprint. Available as arXiv:1801.00485.

\bibitem{patriarca_chakraborti_kashi_2004}
 Patriarca, M., Chakraborti, A. and Kashi, K. (2004). Statistical model with standard~$\Gamma$ distribution. \emph{Phys. Rev.} E \textbf{70} 016104.

\bibitem{xi_ding_wang_2005}
 Xi, N., Ding, N. and Wang, Y. (2005). How required reserve ratio affects distribution and velocity of money. \emph{Physica A} \textbf{357} 543--555.
 
\end{thebibliography}
\end{document}